\newtheorem{theorem}{Theorem}[section]
\newtheorem{lemma}[theorem]{Lemma}
\newtheorem{corollary}[theorem]{Corollary}
\newtheorem{proposition}[theorem]{Proposition}
\newtheorem{definition}[theorem]{Definition}
\newtheorem{remark}[theorem]{Remark}
\newcommand{\useonX}{} 
\newcommand{\useonM}{\underline}
\newcommand{\tm}{\useonX{m}}
\newcommand{\tB}{\useonX{B}}
\newcommand{\tS}{\useonX{S}}
\newcommand{\tc}{\useonX{c}}
\newcommand{\tv}{\useonX{v}}
\newcommand{\tw}{\useonX{w}}
\newcommand{\hB}{\useonM{B}}
\newcommand{\hS}{\useonM{S}}
\newcommand{\hm}{\useonM{m}}
\newcommand{\Pa}{\mathbf{P}}
\newcommand{\Fu}{\mathbf{F}}
\newcommand{\sqbd}{\partial^2 X}
\newcommand {\R}{\mathbb{R}} 
\DeclareMathOperator{\diam}{diam}
\DeclareMathOperator{\pr}{pr}
\begin{document}

\title[Counting geodesic loops]{Counting geodesic loops on  surfaces of genus at least $2$ without conjugate points}

\author{Mark Pollicott and Khadim War}

\date{\today}

\subjclass[2020]{37C35, 
37D40, 
53C22} 

\maketitle

\begin{abstract}
In this paper we prove asymptotic estimates for closed geodesic loops on 
compact  surfaces with no conjugate points.  These generalize  the classical 
 counting results of Huber and Margulis and sector theorems {\color{black} for surfaces of strictly negative curvature. We will also prove more general sector theorems, generalizing results of Nicholls and Sharp  for special case of surfaces of strictly negative curvature.}
\end{abstract}

\section{Introduction}

For a closed surface $M$ of negative curvature there are classical results which count the number of geodesic arcs starting and ending at a given reference point $p \in M$ and whose length at most $t$, say. 
For constant curvature surfaces these were proved by Huber in 1959, and  for variable curvature surfaces these were proved by Margulis in  1969,   In particular, they give simple asymptotic estimates for this counting function as $t \to +\infty$.  In this brief note we  will extend these results in Corollary \ref{counting} to the more general setting of surfaces without  conjugate points.

 There are refinements of the original counting results of Huber and Margulis whereby the geodesics are restricted to lie in a sector.  These are due to shown for constant curvature surfaces  by Nicholls in  1983, and  for variable curvature surfaces by Sharp in  $2001$. 
  We will describe generalizations  of these results to surfaces without  conjugate points in Corollaries \ref{counting0} and \ref{counting}. These will follow from a more general statement (Theorem \ref{thm:main}) which appears below. 

We begin with some general notation.
Let  $(M,g)$ be a closed Riemannian manifold, $SM$ the unit tangent bundle of $M$ and 
let $\pi: SM\to M$ be the natural projection to the footpoint. 

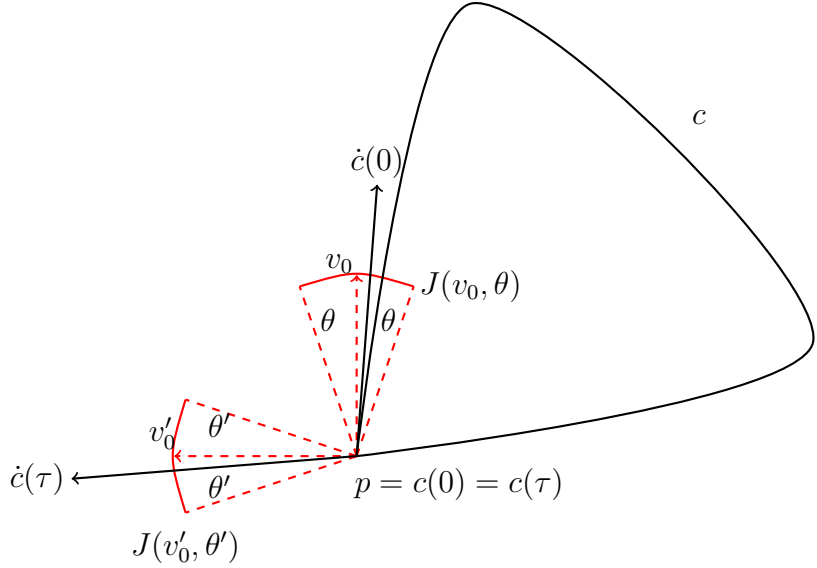
\begin{figure}
\begin{tikzpicture}[thick,scale=1.5, every node/.style={scale=1.0}]
\draw[dashed,red] (0,0) -- (-0.5,1.5) ;
\draw[dashed,red] (0,0) -- (0.5,1.5) ;
\draw[red] (-0.5,1.5) .. controls (0,1.65) .. (0.5,1.5);
\draw[thick, ->,red,dashed] (0,0) -- (0,1.6) ;
\draw[thick, ->] (0,0) -- (0.18,2.4) ;
\node at (-0.25,1.2) {$\theta$};
\node at (0.28,1.2) {$\theta$};
\node at (-0.15, 1.7) {$v_0$};
\node at (1.0, 1.5) {$J(v_0,\theta)$};
\node at (-1.5, -0.8) {$J(v_0',\theta')$};
\draw[dashed,red] (0,0) -- (-1.5,0.5) ;
\draw[dashed,red] (0,0) -- (-1.5,-0.5) ;
\draw[thick, ->,red,dashed] (0,0) -- (-1.6,0) ;
\draw[red]  (-1.5,0.5) .. controls (-1.65,0) ..  (-1.5,-0.5);
\node at (-1.7,0.2) {$v_0'$};
\node at (-1.2, 0.27) {$\theta'$};
\node at (-1.2, -0.27) {$\theta'$};
\node at (3,3) {$c$};
\node at (0.18,2.6) {$\dot c(0)$};
\node at (-2.8,-0.2) {$\dot c(\tau)$};
\draw[thick, ->] (0,0) -- (-2.5,-0.2);
\node at (0.9,-0.25) {$p = c(0) = c(\tau)$};
\draw plot [smooth] coordinates {(0,0)  (1,4)  (4,1)  (0,0)};
\end{tikzpicture}
\caption{A geodesic loop $c$ which starts within an angle $\theta$ of $v_0$ and ends within an angle $\theta'$ of $v_0'$}
\end{figure}

Let  $t,\theta,\theta'>0$ and $v_0,v_0'\in SM$ with $\pi v_0=\pi v_0' = p$, say.
We want to count 
 geodesic loops  $c: [0, \tau] \to M$ which:
\begin{enumerate}
\item
start and finish at $p$  (i.e., $c(0) = c(\tau) = p$);
\item 
have  length  $\tau$ less than  $t$;
\item 
leaves the fibre $S_{p}M$ at an angle at most $\theta$ to $v_0$; and 
\item 
enters the fibre $S_{p}M$ at an angle at most $\theta'$ to $v_0'$.
\end{enumerate}
(see Figure 1).

\begin{definition}
Given an angle $0 < \theta \leq  \pi$  and a unit tangent vector $v_0\in SM$, we define the following arc in the fibre $S_{\pi v_0}M$:     
$$
J(v_0,\theta):=\{w\in S_{p}M: \measuredangle_{p}(v_0,w)\leq\theta\},
$$
i.e., the unit tangent vectors $w$  in the same fibre as $v_0$ at an angle at most  $\theta$.
\end{definition}

This allows us to introduce convenient notation for  the collection of geodesic arcs satisfying properties  (1)--(4).

\begin{definition}\label{def:c}
We let $\mathcal C(t, J(v_0,  \theta), J(v_0',\theta'))$ denote  the set of  geodesic loops $c:[0,\tau]\to M$  based at  
$c(0)=c(\tau)=
p  \in M$ of length  
$\tau \leq t$   and satisfying $c'(0)\in J(v_0,\theta)$ and  $c'(\tau)\in J(v_0',\theta')$.
\end{definition}

We will now  consider 
the problem 
of counting geodesic
the number 
$$\#\mathcal C(t,J(v,\theta), J(v',\theta'))$$
of such geodesic arcs.

We will work in the  general setting  of closed surfaces $M$ 
of genus at least $2$ that 
have  no conjugate points, i.e., for any two points $p,q \in M$ there is no 
geodesic from $p$ to $q$ along which there is a non-trivial Jacobi field  vanishing at $p$ and $q$.
\footnote{Recall that a Jacobi vector field is a  vector field $J(t)$ on the geodesic $c(t)$ satisfying $\frac{D^2}{dt^2} J(t) = R(J(t)), \dot \gamma(t), \dot \gamma(t) = 0$, where $D$ denotes the covariant derivative with respect to the Levi-Civita connection, $R$ the Riemann curvature tensor $\dot \gamma(t) = d\gamma(t)/dt$ the tangent vector field.}
By the Cartan-Hadamard theorem, an  equivalent formulation  is that there is a unique geodesic arc joining distinct points in the universal cover $\widetilde M$.
Examples include 
 the special case that $M$ has non-positive curvature.  We refer to \cite{BBB} for another well known example.

Finally, using  the following notation
$$
S^2M:=\{(v,v')\in SM\times SM: \pi v=\pi v'\}
$$
we can formulate our main result.  

\begin{theorem}\label{thm:main}
Let 
$M$
be a closed connected surface of genus at least $2$ without conjugate points. Then there exists $\theta_0>0$, $h>0$ and a measurable positive function $a:S^2M\times (0,\theta_0)^2\to\mathbb{R}_{>0}$ such that 
\begin{equation}\label{eqn:margulis}
\#\mathcal C(t,J(v,\theta), J(v',\theta')) \sim a(v, v', \theta, \theta')e^{ht}, \hbox{ as } t \to +\infty
\end{equation}
i.e., 
$\lim_{t \to +\infty} \frac{\#\mathcal C(t,J(v,\theta), J(v',\theta'))}{a(v, v', \theta, \theta')e^{ht}}=1$.

\noindent
Moreover if the geodesic flow is expansive
\footnote{A flow $\phi_t:SM \to SM$ is \emph{expansive}
if  for all $\delta >0$ there exists $\epsilon > 0$ 
such that  if  $d(\phi_t(x), \phi_{s(t)}(y)) < \delta$ for all $t \in \mathbb R$ for $x,y \in SM$ and a continuous map $s: \mathbb R \to \mathbb R$ then $y = \phi_t(x)$ where $|t|<\epsilon$.
 }
 then the function $a(\cdot, \cdot,\cdot, \cdot )$ is continuous.
\end{theorem}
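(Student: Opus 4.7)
The plan is to reduce the counting problem to orbit counting for the fundamental group acting on the universal cover, and then apply a Patterson--Sullivan / Bowen--Margulis mixing argument in the style of Roblin, adapted to the no-conjugate-points setting. Let $\widetilde M$ denote the universal cover of $M$ and fix a lift $\widetilde p$ of $p$. Because $M$ has no conjugate points, Cartan--Hadamard guarantees that any two points of $\widetilde M$ are joined by a unique geodesic, so each geodesic loop at $p$ corresponds bijectively to an element $\gamma \in \Gamma := \pi_1(M)$ via deck transformations, with length $d_{\widetilde g}(\widetilde p, \gamma \widetilde p)$. The sector conditions $c'(0) \in J(v,\theta)$ and $c'(\tau) \in J(v',\theta')$ translate, in the visual compactification $\widetilde M \cup \partial \widetilde M$, into the requirement that $\gamma \widetilde p$ lies in a forward cone determined by $(v,\theta)$ at $\widetilde p$, while $\gamma^{-1}\widetilde p$ lies in a forward cone determined by $(v',\theta')$. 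Counting $\#\mathcal C(t, J(v,\theta), J(v',\theta'))$ therefore reduces to counting those $\gamma \in \Gamma$ with $d(\widetilde p, \gamma \widetilde p) \leq t$ subject to these two shadow constraints.

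For the asymptotic I would invoke the theory of Patterson--Sullivan densities and the Bowen--Margulis--Knieper measure of maximal entropy developed for surfaces without conjugate points (building on work of Knieper, Gelfert--Ruggiero, and Climenhaga--Knieper--War). These provide a $\Gamma$-equivariant family of conformal measures $\{\mu_x\}_{x \in \widetilde M}$ of exponent $h = h_{\mathrm{top}}$ on the ideal boundary $\partial \widetilde M$, together with a locally finite flow-invariant measure $m$ on $SM$, built from $\mu_x$ via Hopf-type coordinates, that is mixing under the geodesic flow. A Roblin-style argument, comparing the weighted orbital measure $\sum_{\gamma \in \Gamma} e^{-h d(\widetilde p, \gamma \widetilde p)} \delta_{\gamma \widetilde p}$ with a convolution of $\mu_{\widetilde p}$ with itself via mixing of $m$, then yields the stated asymptotic with
\[
a(v,v',\theta,\theta') \;=\; \frac{\mu_{\widetilde p}\bigl(\mathrm{sh}(v,\theta)\bigr)\, \mu_{\widetilde p}\bigl(\mathrm{sh}(v',\theta')\bigr)}{h\, \|m\|},
\]
where $\mathrm{sh}(v,\theta) \subset \partial \widetilde M$ denotes the shadow at infinity of the arc $J(v,\theta)$ seen from $\widetilde p$. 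The prefactor $a$ is then read off directly.

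For the continuity statement under expansiveness: in general a surface without conjugate points may admit flat strips in $\widetilde M$, and the Patterson density $\mu_x$ may then carry atoms, obstructing continuity of $\theta \mapsto \mu_x(\mathrm{sh}(v,\theta))$. Expansiveness of the geodesic flow rules out flat strips and forces distinct points of $\partial \widetilde M$ to correspond to genuinely distinct geodesics; combined with the uniqueness of the measure of maximal entropy in the expansive setting (in the spirit of Knieper's argument), this implies $\mu_x$ is atomless and varies continuously in $x$, while $\mathrm{sh}(v,\theta)$ depends continuously on $(v,\theta)$ through the boundary map. Joint continuity of $a$ on $S^2M \times (0,\theta_0)^2$ then follows.

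The main obstacle I expect is the second step: constructing Patterson--Sullivan measures and proving mixing of the Bowen--Margulis--Knieper measure without access to the strict convexity of horospheres or the uniform contraction of stable/unstable foliations that one has in negative curvature. One must work with the weaker $C^1$-regularity of horospheres available in the no-conjugate-points setting and carefully handle possible boundary degeneracies (distinct ideal points representing asymptotic geodesics), before the Roblin/Margulis machinery can be applied. Once these measure-theoretic ingredients are in place, the asymptotic, the explicit form of $a$, and its continuity under expansiveness all follow by the now standard shadow-lemma--plus--mixing template.
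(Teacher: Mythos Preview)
Your outline is sound and rests on the same measure-theoretic infrastructure as the paper (the Patterson--Sullivan densities and mixing of the unique measure of maximal entropy from Climenhaga--Knieper--War), but the packaging is genuinely different. The paper follows Margulis' original flow-box scheme: it builds flow boxes $\tB_\theta^\alpha,\tB_{\theta'}^{\epsilon^2}\subset SX$ via the Hopf parametrization, identifies $\#\mathcal C(t,\cdot,\cdot)$ (up to controlled error, using uniform visibility) with the number of $\gamma\in\Gamma$ for which $\tB_{\theta'}^{\epsilon^2}\cap\phi^{-t}\gamma_*\tB_\theta^\alpha\neq\emptyset$, and then estimates this count by computing the $\overline m$-measure of each intersection piece (a ``scaling lemma'') and invoking mixing on a single pair of sets. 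You instead invoke Roblin's double-equidistribution theorem, i.e.\ weak-$*$ convergence of $he^{-ht}\sum_{d(\widetilde p,\gamma\widetilde p)\le t}\delta_{\gamma\widetilde p}\otimes\delta_{\gamma^{-1}\widetilde p}$ to $\|m\|^{-1}\mu_{\widetilde p}\otimes\mu_{\widetilde p}$, evaluated on a product of shadows. Your route yields the product form of $a$ in one stroke, but Roblin's theorem is stated in CAT($-1$); porting it to the no-conjugate-points setting requires exactly the visibility and horosphere estimates the paper imports from \cite{CKW2}, so the ``main obstacle'' you flag is real work, not packaging. The flow-box route avoids proving the full equidistribution statement and uses only mixing on fixed sets, at the cost of explicit $\epsilon$-bookkeeping.

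Two small corrections. First, a sign: the terminal condition $c'(\tau)\in J(v',\theta')$ says the \emph{arrival} direction at $\gamma\widetilde p$ lies in the lift of $J(v',\theta')$; pulling back by $\gamma^{-1}$, this means $\gamma^{-1}\widetilde p$ lies in the cone at $\widetilde p$ in direction $-v'$, not $v'$, so your second shadow should be $\mathrm{sh}(-v',\theta')$. Second, your continuity argument via atomlessness of $\mu_{\widetilde p}$ is in the right spirit but not how the paper argues: there, expansiveness is used to make the Hopf map a bijection between geodesics and $\partial^2 X$, after which continuity of the prefactor follows from continuity of the Busemann function in all its arguments rather than from a separate non-atomicity statement.
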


In the statement of the theorem the value  $h$ is the topological entropy of the geodesic flow on the unit tangent bundle $SM$.

\begin{remark}
In the special case that $M$ has constant curvature then $a(\cdot)$ is a constant function,  and when $M$ has   variable  negative curvature it is known that $a(\cdot)$ is a continuous function (not least because it is expansive).
\end{remark}


Theorem \ref{thm:main} has corollaries which extend several classical results from  the context of negative curvature.
In particular, this leads to generalizations of classical counting and sector theorems.
For example, when  we set $\theta'=\pi$ then this gives the following.

\begin{corollary}[Sector Theorem]\label{counting0}
Given $0 < \theta \leq  \pi$ there exists $a=a(p, \theta)>0$ such that  the number of 
geodesic arcs 
  which:
start at $p \in M$  and finish at $q \in M$;
leave  $S_{p}M$ at an angle at most $\theta$ to $v_0$;
and have  length at most $t$,   is asymptotic to 
$a e^{h t}$ as $t \to +\infty$.
\end{corollary}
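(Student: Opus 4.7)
The plan is to derive Corollary \ref{counting0} as the specialization of Theorem \ref{thm:main} to $\theta' = \pi$. For any auxiliary $v_0' \in S_p M$ one has $J(v_0', \pi) = S_p M$, so the terminal-angle constraint becomes vacuous and
\[ \mathcal{C}(t, J(v_0, \theta), J(v_0', \pi)) = \mathcal{C}(t, J(v_0, \theta), S_p M) \]
is exactly the set of geodesic loops at $p$ of length at most $t$ whose initial direction lies within angle $\theta$ of $v_0$---which is precisely the set counted by the corollary (with $q = p$, since the geodesic necessarily returns to $p$ itself).

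The only technical wrinkle is that Theorem \ref{thm:main} supplies the asymptotic $a(v, v', \theta, \theta') e^{ht}$ only for $(\theta, \theta') \in (0, \theta_0)^2$, whereas here we need $\theta' = \pi$ and allow $\theta$ up to $\pi$. To bridge this, I would partition the fibre $S_p M$ (a circle of total angular length $2\pi$) into finitely many pairwise almost-disjoint closed arcs $B_j = J(u_j, \alpha_j)$, $j = 1, \dots, N$, with each $\alpha_j < \theta_0$. If $\theta \geq \theta_0$, likewise write $J(v_0, \theta) = \bigcup_{i=1}^K A_i$ with $A_i = J(v_i, \beta_i)$ and $\beta_i < \theta_0$, meeting only on endpoints. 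Theorem \ref{thm:main} then yields
\[ \#\mathcal{C}(t, A_i, B_j) \sim a(v_i, u_j, \beta_i, \alpha_j)\, e^{ht}, \]
and, since a finite sum of asymptotics with common exponential rate is again asymptotic to that rate times the sum of constants, summation over $(i, j)$ delivers
\[ \#\mathcal{C}(t, J(v_0, \theta), S_p M) \sim a(p, \theta)\, e^{ht}, \qquad a(p, \theta) := \sum_{i,j} a(v_i, u_j, \beta_i, \alpha_j) > 0. \]

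I anticipate the main (mild) obstacle to be the treatment of \emph{boundary loops}---those whose initial or terminal direction lies exactly on a shared endpoint of adjoining arcs in the partition, and which are therefore over-counted by the naive summation. These can be handled in two standard ways: either choose the partition centres $u_j, v_i$ generically so that no loop at $p$ (a countable set of initial/terminal directions) coincides with a partition boundary; or sandwich $J(v_0,\theta)$ and $S_pM$ between slightly-shrunk and slightly-enlarged versions of the partition and pass to the limit, using the positivity and measurability of $a(\cdot, \cdot, \cdot, \cdot)$ in its angular arguments to identify the common limiting constant. Either route gives the desired asymptotic, while positivity of $a(p,\theta)$ is inherited directly from the strict positivity of each summand guaranteed by Theorem \ref{thm:main}.
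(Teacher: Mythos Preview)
Your proposal is correct and follows essentially the same route as the paper: the authors derive Corollary~\ref{counting0} by specializing Theorem~\ref{thm:main} to $\theta'=\pi$, and they handle the restriction $(\theta,\theta')\in(0,\theta_0)^2$ exactly as you do, remarking after the proof of Proposition~\ref{prop:coun} that ``in order to allow for arbitrary $\theta$ and $\theta'$ in the main theorem we can break the arcs $J(\cdot,\cdot)$ into smaller pieces and apply the proposition.'' Your treatment of the boundary over-counting is more explicit than anything the paper writes down, but it is the natural elaboration of their one-line remark.
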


This generalizes results  from \cite{Ni1}, \cite{Ni2}, \cite{Sh}.

Furthermore, when   $\theta = \theta' =\pi$ then this further reduces to the original   counting result:

\begin{corollary}[Arc counting]\label{counting}
There exists $a=a(p)>0$ such that the  number of 
geodesic arcs 
  which
start  at  $p \in M$, finish at $q \in M$ and have   length at most $t$  is asymptotic to 
$a e^{h t}$ as $t \to +\infty$.
\end{corollary}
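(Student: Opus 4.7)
The plan is to deduce Corollary \ref{counting} directly from Theorem \ref{thm:main} by removing the angular restrictions. Formally, setting $\theta = \theta' = \pi$ in the theorem would give $J(v,\pi) = J(v',\pi) = S_p M$, and $\#\mathcal C(t, S_p M, S_p M)$ would then be precisely the number of geodesic loops at $p$ of length at most $t$, which is the quantity the corollary counts. However, Theorem \ref{thm:main} is proved only for $\theta, \theta' \in (0, \theta_0)$ with $\theta_0$ potentially strictly smaller than $\pi$, so we cannot simply plug in $\pi$; instead we will assemble the total count from finitely many applications of the theorem on small sectors and add the resulting asymptotics.

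Concretely, I would fix an integer $N$ with $2\pi/N < \theta_0$ and choose equally spaced directions $v_1, \ldots, v_N \in S_p M$ so that the closed arcs $B_i := J(v_i, \pi/N)$ tile the fibre $S_p M$ with adjacent $B_i$ meeting only at a single boundary vector. Since $M$ has no conjugate points, the Cartan--Hadamard-type statement recorded in the excerpt gives a unique geodesic between any two points of the universal cover $\widetilde M$, and $\pi_1(M)$ is countable, so the set of geodesic loops at $p$ is countable. Hence the set $\Lambda \subset S_p M$ of initial and terminal tangent vectors of such loops is countable as well. I would rotate the partition slightly so that none of the $N$ boundary directions separating the $B_i$ lies in $\Lambda$. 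With this choice every geodesic loop at $p$ has $c'(0)$ in the interior of a unique $B_i$ and $c'(\tau)$ in the interior of a unique $B_j$, giving the exact identity
\begin{equation*}
\#\{\text{geodesic loops at } p \text{ of length } \leq t\} \;=\; \sum_{i,j=1}^{N} \#\mathcal C(t, B_i, B_j).
\end{equation*}
Applying Theorem \ref{thm:main} to each of the $N^2$ summands and adding the asymptotics yields
\begin{equation*}
\#\{\text{geodesic loops at } p \text{ of length } \leq t\} \;\sim\; a(p)\, e^{ht}, \qquad a(p) \;:=\; \sum_{i,j=1}^{N} a\!\left(v_i, v_j, \pi/N, \pi/N\right),
\end{equation*}
with $a(p) > 0$ because each summand is strictly positive by Theorem \ref{thm:main}.

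The only delicate point is the rotation step: without it, a loop whose initial or terminal tangent happened to lie on a shared boundary between two adjacent sectors $B_i$, $B_{i+1}$ would be counted in both $\#\mathcal C(t, B_i, \cdot)$ and $\#\mathcal C(t, B_{i+1}, \cdot)$, spuriously inflating the sum. The countability of $\Lambda$ eliminates this issue cleanly, reducing the proof to a finite sum of already-established asymptotics. One might note that the definition of $a(p)$ appears to depend on $N$ and on the chosen rotation, but since $a(p) e^{ht}$ is determined as the limit of $\#\{\text{loops of length } \leq t\}/e^{ht}$, the value of $a(p)$ is intrinsic. Beyond this observation, the argument is essentially bookkeeping, making Corollary \ref{counting} a short consequence of Theorem \ref{thm:main} rather than a substantive independent proof.
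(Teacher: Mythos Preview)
Your proposal is correct and follows essentially the same route as the paper. The paper deduces the corollary by taking $\theta=\theta'=\pi$ in Theorem~\ref{thm:main}, justifying the passage beyond the range $(0,\theta_0)$ with the one-line remark at the end of \S3 that arbitrary angles are obtained by breaking the arcs $J(\cdot,\cdot)$ into smaller pieces and applying the proposition to each piece; your partition of $S_pM$ into $N$ small sectors and summation of the $N^2$ resulting asymptotics is exactly this decomposition spelled out in detail, and your rotation step to avoid boundary tangents is a clean way to make rigorous a point the paper leaves implicit.
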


This generalizes results from
\cite{Hu},
 \cite{Ma1}, \cite{Ma2}.

{\color{black}

Finally, we can describe  equidistribution result  of a slightly different flavour.
Let $\widehat M$ be a finite cover for $M$.   We can associate to any geodesic arc $c$ on $M$ which starts and ends at $p \in M$ (and has length $L_c$) a lift $\widehat c$ to $\widehat M$.  The following corollary estimates the proportion of  geodesic arcs such that  $\widehat c$ on 
$\widehat M$  with  $\widehat c(0) = \widehat c(L_c)$

\begin{corollary}[Equidistribution in finite covers]
The  proportion  of 
geodesic arcs $c$  which
start and end  at $p \in M$, have lifts $\widehat c$ which start and end at the same point in $\widehat M$,   and  have   length at most $t$  is asymptotic to 
$$\frac{\hbox{\rm Area}(M)}{ \hbox{\rm Area} (\widehat M)} a e^{h t} \hbox{ as $t \to +\infty$ }
$$

\end{corollary}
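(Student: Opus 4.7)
The plan is to reduce the count in the statement to a count of closed geodesic loops on the cover $\widehat M$ itself, and then to compare the Margulis-type constants produced by Corollary~\ref{counting} on the two surfaces. For the reduction, fix one lift $\widehat p \in \widehat M$ of $p$. Since $\pi: \widehat M \to M$ is a local isometry, every geodesic loop $c:[0,L_c]\to M$ based at $p$ admits a unique lift $\widehat c :[0,L_c]\to \widehat M$ with $\widehat c(0)=\widehat p$, which is itself a geodesic of the same length. The arc $c$ is one of those we wish to count precisely when $\widehat c(L_c)=\widehat p$, i.e.\ when $\widehat c$ is a geodesic loop at $\widehat p$; conversely every geodesic loop at $\widehat p$ projects to such an arc. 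This gives a length-preserving bijection between the arcs in the statement and the geodesic loops at $\widehat p$ in $\widehat M$ of length at most $t$.

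Since $\widehat M$ is itself a closed surface of genus at least $2$ without conjugate points (as a finite cover of such a surface), Corollary~\ref{counting} applied on $\widehat M$ shows that this second count is asymptotic to $\widehat a(\widehat p)\,e^{\widehat h t}$, where $\widehat h$ is the topological entropy of the geodesic flow on $S\widehat M$. Two identifications then finish the proof. First, $\widehat h = h$: the projection $\pi: S\widehat M \to SM$ is a $d$-to-one continuous factor map intertwining the two geodesic flows, with $d=\mathrm{Area}(\widehat M)/\mathrm{Area}(M)$, and since its fibres are finite (hence of zero topological entropy), Bowen's inequality for factor maps yields equality of entropies. Second,
\[
\widehat a(\widehat p) = \frac{\mathrm{Area}(M)}{\mathrm{Area}(\widehat M)}\, a(p).
\]
To see this, note that from the proof of Theorem~\ref{thm:main} the constant $a(p)$ takes the Margulis-type form $a(p) = \nu^s_p(S_pM)\,\nu^u_p(S_pM)/(h\,\mu(SM))$, where $\mu$ is the unique measure of maximal entropy on $SM$ and $\nu^{s/u}_p$ are its conditional measures on the strong stable/unstable leaves crossing $S_pM$; the analogous formula describes $\widehat a(\widehat p)$ on $\widehat M$. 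Because $\pi$ is a local isometry, the strong stable/unstable foliations and their Margulis conditionals lift canonically, so $\widehat\nu^{s/u}_{\widehat p}$ agrees with $\nu^{s/u}_p$ under the local identification $S_{\widehat p}\widehat M \cong S_pM$, while the MME on $S\widehat M$ is the canonical lift of $\mu$ and has total mass $d$ times that of $\mu$ on $SM$ in this normalisation. Substituting gives the factor $1/d$.

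The main obstacle is precisely this last identification: verifying that the measure of maximal entropy and its strong stable/unstable conditionals transport correctly across the finite cover, and that the resulting normalisation factor is exactly $d=\mathrm{Area}(\widehat M)/\mathrm{Area}(M)$. This rests on the uniqueness of the MME for the geodesic flow on surfaces of genus at least $2$ without conjugate points --- the same input used in Theorem~\ref{thm:main}, now invoked on both $M$ and $\widehat M$ --- together with a direct local computation in coordinates adapted to the stable/unstable foliations. Everything else (the bijection above and the entropy equality under a finite factor map) is standard.
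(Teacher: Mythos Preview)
The paper states this corollary without proof, so there is no detailed argument to compare against; the implicit intention is exactly your reduction --- apply Corollary~\ref{counting} on $\widehat M$ and compare constants. Your steps 1--3 (the bijection with loops at a fixed $\widehat p$, applying the arc count on $\widehat M$, and $\widehat h=h$) are correct and are what the paper has in mind. For the entropy equality there is an even more direct argument available here: $M$ and $\widehat M$ share the universal cover $X$, and the paper's Definition of $h$ as the volume growth of balls in $X$ depends only on $X$.

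The gap is in step 4. The formula you invoke, $a(p)=\nu^s_p(S_pM)\,\nu^u_p(S_pM)/(h\,\mu(SM))$, is \emph{not} what the paper derives, and in the generality of surfaces without conjugate points it is not obviously meaningful: without expansivity the stable and unstable horospheres can coincide along nontrivial arcs (flat strips), so ``strong stable/unstable leaves crossing $S_pM$'' and their Margulis conditionals are not available as objects on $SM$. The paper deliberately bypasses this by working with the Patterson--Sullivan measures on $\partial X$ and the Hopf parametrisation; see Proposition~\ref{prop:coun} and equation~\eqref{eq:mub}.

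The argument that fits the paper's framework is this. Since $M$ and $\widehat M$ have the same universal cover $X$, they share $\partial X$, the Busemann functions, the Patterson--Sullivan family $\{\mu_{\overline p}\}$ (the conformal relation for $\widehat\Gamma<\Gamma$ is inherited, and uniqueness from \cite{CKW1} forces agreement up to a scalar that cancels in all ratios), and hence the current $\overline m$ on $SX$. Tracing through the proof of Proposition~\ref{prop:coun}, the only place the group enters is the mixing step, which uses the \emph{normalised} measure of maximal entropy on the quotient; a fundamental domain for $\widehat\Gamma$ carries $d=[\Gamma:\widehat\Gamma]=\mathrm{Area}(\widehat M)/\mathrm{Area}(M)$ times the $\overline m$-mass of one for $\Gamma$, so the constant for $\widehat M$ acquires exactly the factor $1/d$. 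This replaces your ``local computation in coordinates adapted to the stable/unstable foliations'' and requires no expansivity hypothesis.
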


This corollary can be used to prove a corollary related  to the first Homology group $H_1(M, \mathbb Z)$. 
Each closed loop $c$ based at $p$ gives rise naturally  to an element $\langle c\rangle \in H_1(M, \mathbb Z)$.  Let us consider a 
finite index  subgroup $G <  H_1(M, \mathbb Z)$ then for a geodesic arc $c$ we can associate the  coset  $\langle c\rangle G \in   H_1(M, \mathbb Z)G$.

\begin{corollary}[Homological Equidistribution]
For a fixed  coset $\alpha \in H_1(M, \mathbb Z)/G$.
The  proportion  of 
geodesic arcs $c$  which
start and finish  at $p \in M$, satisfy $\langle c \rangle \Gamma = \alpha$ and  have   length at most $t$  is asymptotic to 
$$\#(H_1(M, \mathbb Z)/\Gamma) a e^{h t} \hbox{ as } t \to +\infty 
$$
 \end{corollary}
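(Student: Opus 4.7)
The plan is to convert the homological condition into a covering-space condition and then run a character-sum argument. The pullback of $\Gamma$ along the abelianization $\pi_1(M,p)\to H_1(M,\mathbb{Z})$ is a finite-index normal subgroup of $\pi_1(M,p)$, so it determines a regular cover $\widehat M \to M$ with finite abelian deck group $G = H_1(M,\mathbb{Z})/\Gamma$ of order $n$. The surface $\widehat M$ is again closed, of genus at least $2$, and without conjugate points. Fixing a lift $\hat p$ of $p$, each geodesic loop $c$ at $p$ has a unique lift $\hat c$ with $\hat c(0)=\hat p$, and by construction $\hat c(L_c)=(\langle c\rangle\Gamma)\hat p$. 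Hence the condition $\langle c\rangle\Gamma=\alpha$ is equivalent to $\hat c$ being a geodesic arc from $\hat p$ to $\alpha\hat p$ in $\widehat M$, so the coset-count reduces to counting such arcs in the cover.

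Next, I would carry out a Fourier decomposition on the finite abelian group $G$. For each character $\chi\in\widehat G$, set
$$
Z_\chi(t) \;=\; \sum_{c:\ L_c\leq t}\chi(\langle c\rangle\Gamma),
$$
the sum being over geodesic loops $c$ based at $p$. Orthogonality of characters then yields
$$
N_\alpha(t) \;:=\; \#\{c : L_c\leq t,\ \langle c\rangle\Gamma=\alpha\} \;=\; \frac{1}{n}\sum_{\chi\in\widehat G}\overline{\chi(\alpha)}\, Z_\chi(t).
$$
The trivial character contributes $Z_{\chi_0}(t) \sim a\, e^{ht}$ by Corollary \ref{counting}, so the desired uniform equidistribution $N_\alpha(t) \sim (a/n)\, e^{ht}$ reduces to proving $Z_\chi(t) = o(e^{ht})$ for every nontrivial $\chi$. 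The $\alpha = 0$ case of this would in particular recover the preceding corollary on finite covers.

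The main obstacle is precisely this bound on the twisted sum $Z_\chi(t)$ when $\chi$ is nontrivial. Such a $\chi$ determines a flat unitary line bundle over $M$ and, via the Margulis-type construction underlying Theorem \ref{thm:main}, a twisted transfer operator whose spectral radius controls the exponential growth of $Z_\chi(t)$. In strictly negative curvature, the required bound (strict inequality below $e^{h}$) is the classical Parry-Pollicott-Sharp spectral gap. Here one must reproduce the argument with the Margulis-type measures built in the proof of Theorem \ref{thm:main} in place of the usual Bowen-Margulis-Sullivan measure, and use the ergodicity of the geodesic flow on $S\widehat M$ to exclude the possibility that a nontrivial character achieves the maximal growth rate. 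Granting this spectral gap, Fourier inversion in the identity above delivers $N_\alpha(t) \sim (a/n)e^{ht}$ uniformly in $\alpha\in G$, which is the content of the corollary.
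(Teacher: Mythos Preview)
Your first paragraph already contains the entire reduction the paper has in mind, and after that point the character--sum machinery is an unnecessary detour. Once you have identified the loops $c$ with $\langle c\rangle\Gamma=\alpha$ with geodesic arcs in $\widehat M$ from $\hat p$ to the fixed point $\alpha\hat p$, you can simply apply Corollary~\ref{counting} (arc counting) on $\widehat M$: since $\widehat M$ is again a closed surface of genus $\geq 2$ without conjugate points and has the same universal cover $X$ (hence the same $h$), that corollary gives
\[
N_\alpha(t)\ \sim\ \hat a_\alpha\, e^{ht}.
\]
The deck group $G=H_1(M,\mathbb Z)/\Gamma$ acts on $\widehat M$ by isometries permuting the points $\alpha\hat p$ and fixes the (unique) measure of maximal entropy and the Patterson--Sullivan data used to define the constant; hence $\hat a_\alpha$ is independent of $\alpha$. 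Summing over $\alpha\in G$ recovers the total loop count on $M$, so each $\hat a_\alpha=a/n$. This is exactly how the paper intends the corollary to follow from the preceding ``finite covers'' corollary; no twisted transfer operators or spectral gap are required.

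By contrast, your paragraphs~2--3 make the result hinge on $Z_\chi(t)=o(e^{ht})$ for nontrivial $\chi$, which you reduce to a Parry--Pollicott--Sharp type spectral gap. That gap is available in strictly negative curvature via thermodynamic formalism, but it is \emph{not} established in the present no-conjugate-points setting; the only dynamical input proved here is mixing of the measure of maximal entropy, which yields the asymptotic in Theorem~\ref{thm:main} but not a strict spectral inequality for twisted operators. So as written your argument has a genuine gap at precisely the step you flag as the ``main obstacle'', whereas the direct route through arc counting on $\widehat M$ avoids it entirely. Keep your first paragraph and replace the rest by the two-line application of Corollary~\ref{counting} on the cover plus the $G$-invariance of the constant.
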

}

\smallskip
\begin{remark}
The theorem and each of the corollaries has a natural  equivalent formulation in terms of the action 
$\Gamma \times X \to X$
of the covering group $\Gamma = \pi_1(M)$ on the universal cover $X$.  For example, Corollary \ref{counting} gives an asymptotic estimate for $\#\{g \in \Gamma \hbox{ : } d_X(\overline p, g \overline p) \leq t\}$ where $\overline p \in X$ and $d_X$ is the lifted Riemannian metric to $X$.
\end{remark}

\section{Closed arcs and isometries}
The structure of the proof of Theorem \ref{thm:main} follows the lines of Margulis' original proof.  However, it requires modifications
using a number of  recent  techniques from \cite{CKW1}, \cite{CKW2}.
A  key ingredient is the  construction of the measure of maximal entropy for the geodesic flow $\phi_t: SM \to SM$.

\subsection{Some Notation}
Let $X$ be the universal cover of $M$ with the lifted metric.  The covering group $\Gamma \cong \pi_1(M)$ satisfies that $M = X/\Gamma$.

Let $SX$ denote  the unit tangent bundle for $X$   and let $\overline \pi: SX \to X$ denote  the canonical projection of a unit tangent vector in $SX$ to its footpoint in $X$.
Let $\overline p \in X$ be a lift of $p \in M$ and let   $\underline B(\overline p, R) \subset X$ denote  a ball of radius $R>0$ about  $\overline p$. 
We can use this to give a  convenient  definition of topological entropy \cite{FM}.

\begin{definition}
The \emph{topological entropy} $h = h(\phi)$ is given by
$$h = \lim_{R\to +\infty} \frac{\log \hbox{\rm Vol} (\underline B(\overline p,R))}{R}.$$
\end{definition}

Given  $\overline v\in SX$, let $c= c_{\overline  v}:\mathbb{R}\to X$ denote the unique geodesic such that $c_{\overline  v}(0)=\overline \pi(\overline  v)$ and $c'_{\overline  v}(0)= \overline  v$. 

\medskip
\begin{definition}
Let $\partial X$ denote  the ideal boundary of $X$ consisting of equivalence classes $[c]$ of geodesics $c : \mathbb R \to X$ which stay a bounded distance apart.
\end{definition} 
\noindent
(See \cite[Section 2]{CKW1} for a detailed description of  the  construction and properties of $\partial X$).
In particular, every geodesic $c_{\overline v}:\mathbb{R}\to X$ defines two points $c(\pm\infty)\in\partial X$, which it is  convenient to denote $\overline v^-:=c(\-\infty)$
and $\overline v^+ :=c(+\infty)$.
The natural acton $\Gamma \times X \to X$ extends to an action 
 of $\Gamma$ on $\partial X$ given by $g[c] = [g c]$, where $g \in \Gamma$.
\medskip
\noindent


\begin{definition}
Given $\overline p \in X$, 
the \emph{Busemann function}
 $b_{\overline p}(\cdot, \cdot): X \times \partial X \to \mathbb R$   is defined by 
 $$b_{\overline p}(\overline q, \xi) = \lim_{t \to +\infty} d(\overline q, c_v(t))-t$$ for  $\overline v\in S_{\overline p}X$ satisfying 
  $\xi = c_{\overline v}(+\infty)$ \cite[Definition 2.16]{CKW1}. 
\end{definition}

\medskip
We next recall the characterization  of Patterson-Sullivan measures on the boundary $\partial X$ 
 constructed in \cite[Proposition 5.1]{CKW1}.

\begin{definition}
The \emph{Patterson-Sullivan measures} on $\partial X$ 
are a family of measures $\{\mu_{\overline  p} \hbox{ : }  \overline p\in X\}$ 
which  transform under the action of $\Gamma$ on $\partial X$ by
$$\frac{d\mu_{{\overline p}}\gamma}{d\mu_{\overline p}}(\xi) = e^{-h b_{\overline p}(\gamma {\overline p}, \xi)}$$
 for $\gamma \in \Gamma$ and $\xi \in \partial X$
  \end{definition}
  
  The Busemann function is also used in defining horocycles.
  
  \begin{definition}
   The
  \emph{stable horocycle} is defined by
  $$H_\xi({\overline p}) = \{\overline q \in X \hbox{  : } b_{\overline p}(\overline q, \xi) = 0\}$$  
  and the  \emph{unstable horocycle}  is defined by
  $$H^-_\xi({\overline p}) = \{q \in X \hbox{  : } b_{\overline p}(\overline q, -\xi) = 0\}$$
   where $-\xi$ is the antipodal vector to $\xi$. 
  \end{definition}

Finally, we define a class of tangent vectors which will serve us well in the proof.

\begin{definition}
We denote by  $\mathcal E \subset SX$  the set of \emph{expansive vectors} consisting of those unit tangent vectors whose stable and unstable 
horocycles
intersect at exactly one point.
\end{definition}

\begin{figure}[h!]
\centerline{
\begin{tikzpicture}[thick,scale=0.8, every node/.style={scale=0.8}]
\draw (0,0) circle (80pt);
\draw (1.5,0) circle (37pt);
\node at (3.1,0) {$\xi$};
\filldraw (-1,0) circle (1.5pt);
\node at (-1.5,0) {$q$};
\filldraw (0.8,1.1) circle (1.5pt);
\node at (0.3,1.1) {$\overline p$};
\draw[<->] (-0.95,0)--(0.2,0);
\node at (-0.35,-0.5) {$b_{\overline p}(\overline q, \xi)$};
\node at (1.4,-0.9) {$H_\xi(\overline p)$};
\end{tikzpicture}
\hskip 1cm
\begin{tikzpicture}[thick,scale=0.8, every node/.style={scale=0.8}]
\draw (0,0) circle (80pt);
\draw (1.05,0) circle (50pt);
\draw (-1.05,0) circle (50pt);
\node at (3.1,0) {$\xi$};
\node at (-3.1,0) {$\eta$};
\%filldraw (-1,0) circle (1.5pt);
\filldraw (0,1.4) circle (1.5pt);
\node at (0,1.9) {$\overline p$};
\draw[<->] (-0.70,0)--(0.7,0);
\node at (0.0,0.4) {$b_{\overline p}(\xi, \eta)$};
\node at (1.4,-1.2) {$H_\xi(\overline p)$};
\node at (-1.4,-1.2) {$H_\eta(\overline p)$};
\end{tikzpicture}
}
\caption{ 
(i) The geometric interpretation of the Busemann function as the  signed distance  of $\overline q$ from $H_\xi(\overline p)$ corresponds to $b_p(q,\xi)$; 
(ii) The distance  between the horocycles $H_\xi(\overline p)$ and 
$H_\eta(\overline p)$ corresponds to $b_{\overline p}(\xi, \eta)$}
\end{figure}

\subsection{The measure of maximal entropy}
We  begin with a correspondence  which is useful in the construction of measures of maximal entropy.

\begin{definition}\label{def:hopf}
The \emph{Hopf map} $H \colon SX \to \sqbd \times \R$  is defined by 
\begin{equation}\label{eqn:hopf}
H(\tv) := ({\overline v}^-, {\overline v}^+, s(\tv))\quad\text{ where }\quad s({\overline v}) :=b_{p}(\pi ({\overline v}, ({\overline v}^-) .
\end{equation}
\end{definition}

In particular, 
following \cite[Lemma 5.5]{CKW1} 
this family of measures defines a $\Gamma$-invariant measure $\overline{\mu}$  on $\partial X\times\partial X\setminus \text{diag}$ 
(where $\text{diag} \subset \partial X\times\partial X$ are the diagonal elements) characterized by  
\begin{equation}\label{eq:mub}
d\overline{\mu}(\xi,\eta)=e^{h\beta_{\overline p}(\xi,\eta)}d\mu_{\overline p}(\xi)d\mu_{\overline p}(\eta), \hbox{ for } \xi, \eta \in \partial X,
\end{equation}
where $\beta_{\overline p}(\xi,\eta)$ is the distance in $X$  between the horospheres
$H_{\overline p}(\xi)$ and $H_{\overline p}(\eta)$, see  Figure 2 (ii) (or \cite[Figure 1]{CKW2}).

\begin{definition}
The Hopf transform carries $d\overline{\mu} \times dt$ to a measure $d\overline m := H_*(d\overline{\mu} \times dt)$
on $SX$. 
\end{definition}

There is a natural projection from $SX$ to $SM$ (taking $v$ to $v \Gamma$).
The following result was proved in \cite[Theorem 1.1]{CKW1}.

\begin{lemma}
The measure $\overline m$ on $SX$
projects 
(after normalization)
to the  measure 
$\underline m$ 
maximal entropy for the geodesic flow on $SM$ (i.e., $\pi_*\overline m = \underline m$ and  $h(\underline m) = h$).   Moreover,
\begin{enumerate}
\item 
$\underline m$ is unique, strongly mixing
\footnote{$\underline m$ is even shown to be Bernoulli}
  and fully supported; and 
\item
$
\underline 
m(\mathcal E) =1$
 (cf. \cite[Equation (2.10)]{CKW1}). 
\end{enumerate}
\end{lemma}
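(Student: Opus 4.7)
The plan is to verify each of the claims by adapting Margulis' scheme to the no-conjugate-points setting, leaning on the Patterson-Sullivan framework of \cite{CKW1}.

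First, I would check that $\overline m$ is invariant under both the geodesic flow on $SX$ and the deck group $\Gamma$, so that it descends to a flow-invariant measure on $SM$. In the Hopf coordinates of Definition~\ref{def:hopf}, the geodesic flow is simply translation in the $s$-coordinate, so flow invariance reduces to the translation invariance of $dt$. For $\Gamma$-invariance, I would combine the transformation rule $d\mu_{\overline p}\gamma/d\mu_{\overline p}(\xi) = e^{-h b_{\overline p}(\gamma\overline p,\xi)}$ with a cocycle identity for the Busemann function $\beta_{\overline p}$; the density $e^{h\beta_{\overline p}(\xi,\eta)}$ is exactly the factor needed to cancel the $\gamma$-Jacobians on the two $\partial X$ factors, while the shift in the $s$-coordinate is absorbed by translation invariance of $dt$. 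After normalization using compactness of $SM$, this produces $\underline m$.

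Second, for the entropy identity $h(\underline m)=h$, I would argue via the Brin-Katok formula. The Hopf description of $\overline m$ exhibits a local product structure in which the conditionals of $\underline m$ on stable and unstable horocycles are precisely the Patterson-Sullivan conditionals, which scale by $e^{\pm h s}$ under the flow. Inserting this scaling into the Busemann-function estimates for the $\underline m$-measure of Bowen balls from \cite{CKW1} gives the lower bound $h(\underline m)\geq h$; the reverse inequality is forced by the variational principle together with the identification of $h$ as the topological entropy of $\phi$.

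Finally, for uniqueness, strong mixing, full support, and the measure-one statement $\underline m(\mathcal E)=1$, the plan is to import the structural results of \cite{CKW1}. Full support is immediate from the strict positivity of $\mu_{\overline p}$ on non-empty open sets of $\partial X$, which follows from the minimality of the $\Gamma$-action on $\partial X$ in our setting. Uniqueness comes from a leafwise Radon-Nikodym argument showing that any measure of maximal entropy must have horocyclic conditionals proportional to the Patterson-Sullivan conditionals, and hence must coincide with $\underline m$. Strong mixing then follows from a Hopf-style argument restricted to the expansive set $\mathcal E$, combined with ergodicity of $(\partial X\times\partial X,\overline\mu)$ under the diagonal $\Gamma$-action. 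The main obstacle is the claim $\underline m(\mathcal E)=1$: without uniform hyperbolicity, the non-expansive locus could a priori carry positive mass, and one must show that the pairs $(\xi,\eta)\in\partial X\times\partial X$ whose associated stable and unstable horocycles meet at more than one point are $\overline\mu$-null. Here I would appeal to the genus $\geq 2$ hypothesis, which rules out flat strips in $X$ and confines the non-expansive locus to a subset of $\partial X\times\partial X$ of Patterson-Sullivan measure zero, as in the geometric analysis of \cite[Section 2]{CKW1}.
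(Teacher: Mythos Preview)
The paper does not prove this lemma; it is simply quoted from \cite[Theorem~1.1]{CKW1}, with item~(2) referenced to \cite[Equation~(2.10)]{CKW1}. Your proposal is, in effect, a sketch of the argument carried out in \cite{CKW1}, and at the level of an outline it matches that paper's strategy: build $\overline m$ in Hopf coordinates, verify $\Gamma$- and flow-invariance, obtain the entropy identity and uniqueness via the Patterson--Sullivan conditional structure, and deduce mixing by a Hopf-type argument on $\mathcal E$. So there is no real divergence between your approach and what the paper invokes.

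One point needs correction, however. The genus~$\geq 2$ hypothesis does \emph{not} ``rule out flat strips in $X$'': higher-genus surfaces without conjugate points can carry flat strips, and hence genuinely non-expansive vectors (this is precisely why the paper must distinguish the expansive and non-expansive cases in Theorem~\ref{thm:main}). What the hypothesis guarantees is that $h>0$ and that the non-expansive set is a proper closed flow-invariant subset of $SM$. The way $\underline m(\mathcal E)=1$ is obtained in \cite{CKW1} is not by a geometric exclusion of strips but by an entropy/ergodicity argument: invariant measures supported on the non-expansive set have zero entropy, so the ergodic measure $\underline m$ with $h(\underline m)=h>0$ must assign that set measure zero. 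Your conclusion is correct, but the mechanism you describe would fail as stated.
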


We now turn to the final ingredients in the proof.
\subsection{Flow boxes}
For the remainder of this section, we fix a choice of $(v_0,v_0')\in S^2M\cap \mathcal E^2$.
We can then  associate to the sets $J(v_0,\theta), J(v_0',\theta') \subset SM$ in 
Definition \ref{def:c} a choice of 
lifts 
 $\overline J(v_0,\theta), \overline J(v_0',\theta') \subset SX$.

 To proceed we want to consider the natural  images of these sets in $\partial X$:

 \begin{definition}
We can  associate to $J(v_0,\theta)$ and $J(v_0',\theta')$ their 
 ``future'' and ``past'' subsets of $\partial X$ defined, respectively,  by 
$$
\Fu=\Fu_\theta := \{  \bar w^+ : \bar w\in \overline  J(v_0,\theta) \} \text{ and }
\Pa=\Pa_\theta := \{ \bar w^- : \bar w\in \overline J(v_0,\theta) \}
$$
$$
\Fu'=\Fu_{\theta'} := \{ \bar w^+ : \bar w\in \overline J(v_0',\theta') \}\text{ and }
\Pa'=\Pa_{\theta'} := \{ \bar w^- : \bar w\in \overline J(v_0',\theta') \}.
$$
\end{definition}

\begin{figure}[h!]
\centerline{
\begin{tikzpicture}[thick,scale=1.0, every node/.style={scale=0.8}]
\draw (0,0) circle (80pt);
\draw[dashed] (0,0) -- (2.3, 1.7) ;
\draw[dashed] (0,0) -- (2.3, -1.7) ;
\draw[dashed] (0,0) -- (-2.3, 1.7) ;
\draw[dashed] (0,0) -- (-2.3, -1.7) ;
\draw[->, red, dashed] (0,0)--(1.25,0);
\node at (1.5,0.2) {$v$};
\node at (3,1) {$\Fu$};
\node at (-3,1) {$\Pa$};
\node at (0.8,0.3) {$\theta$};
\node at (0.8,-0.3) {$\theta$};
\draw[ultra thick,red]  (2.2, 1.7) .. controls (3.0, 1)  and  (3.0, -1)  .. (2.2, -1.7) ;
\draw[ultra thick,blue]  (-2.2, 1.7) .. controls (-3.0, 1)  and  (-3.0, -1)  .. (-2.2, -1.7) ;
\draw[ultra thick,red]  (1.0, 0.8) .. controls (1.4, 0.5)  and  (1.4, -0.5)  .. (1.0, -0.8) ;
\node at (1.7,-0.7) {$\widetilde J(v, \theta)$};
\end{tikzpicture}
}
\caption{
The sets $\Pa$ and $\Fu$ associated to $\widetilde J(v,\theta)$}
\end{figure}

The sets $\Fu, \Pa, \Fu', \Pa' \subset \partial X$ will be used to construct flow  boxes for the geodesic flow.
Assume first that $\epsilon>0$ is small (with respect to the injectivity radius of 
$M$)
and then choose  $\theta_1>0$ such that 
for all $\theta < \theta_1$ we have
$$
\hbox{\rm diam} \left(\pi H^{-1}
(\Pa \times \Fu \times\{0\}) \right) < \frac{\epsilon}{2}
$$
(see \cite[Lemma 3.9]{CKW2}).   For $\alpha\leq\frac{3}{2}\epsilon$ and $\theta\in(0,\theta_1)$ we define two different flow boxes
\footnote{for the geodesic flow  $\phi_t: SX \to SX$ on $SX$}
$\tB_\theta^\alpha$ and $\tB_{\theta'}^{\alpha'}$
 (of different ``lengths'' $\alpha$ and $\epsilon^2$, respectively) in $SX$ by:

\begin{equation}\label{eqn:B}
\begin{aligned}
&\overline \tB_\theta^\alpha := H^{-1}(\Pa\times\Fu \times [0, \alpha])
\hbox{ and } \cr
&\overline \tB_{\theta'}^{\epsilon^2} := H^{-1}(\Pa'\times\Fu' \times [0, \epsilon^2]). 
\end{aligned}
\end{equation}
(cf.   \cite[(3.11) and (3.12)]{CKW2}).

Let $\underline  \tB_\theta^\alpha = \pi(\tB_\theta^\alpha)$
and $\underline \tB_{\theta'}^{\epsilon^2}  = \pi(\tB_{\theta'}^{\epsilon^2} )$ be their projections onto $SM$.

\begin{remark}
Since  the function $\rho'\to \tm(\underline \tB^{\epsilon^2}_{\rho'})$ is nondecreasing, and thus 
 has countably many discontinuties (by Lebesgue's Theorem), we can  suppose without loss of generality  that $\theta' \in (0, \theta_1)$ is a continuity point,  and so, in particular, 
\begin{equation}\label{eqn:SB-cty}
\lim_{\rho'\to\theta'}\tm(\underline \tB^{\epsilon^2}_{\rho'})=\tm(\underline \tB^{\epsilon^2}_{\theta'}).
\end{equation}
\end{remark}

In order to give  a dynamical approach  to the counting problem   the following two definitions will prove useful.
Let $\phi^t: SX \to SX$ denote the geodesic flow on $SX$.

\begin{definition} For $t>0$ we can define two subsets of $\Gamma$ by:
\begin{equation}\label{eqn:Gt}
\begin{aligned}
\Gamma_{\theta,\theta'}(t)
& :=
\{\gamma\in \Gamma : \overline \tB_{\theta'}^{\epsilon^2} \cap \phi^{-t} \gamma_* \overline \tB_\theta^\alpha \neq \emptyset \}
\end{aligned}
\end{equation}
\begin{equation}\label{eqn:Gt*}
\Gamma^*_{\theta,\theta'}(t)  
:=
\{\gamma\in \Gamma_{\theta,\theta'}(t) :  \gamma\Fu\subset\Fu'\text{ and } \gamma^{-1}\Pa\subset\Pa' \}.
\end{equation}
where the sets have an implicit dependence on $\epsilon, \alpha,v_0, v_0'$.
(cf. \cite[(4.4) and (4.14)]{CKW2}.)
\end{definition}

By definition we have $\Gamma_{\theta, \theta'}^*(t) \subset \Gamma_{\theta, \theta'}(t)$ and 
although we may not  expect the reverse inclusion to be true, we have the following slightly more modest result.

\begin{lemma}\label{lem:vis1}
For every $\rho'\in(0,\theta')$ and $\rho\in (0,\theta)$, there exists $t_0>0$ such that 
$$
 \Gamma_{\rho,\rho'}(t)\subset  \Gamma_{\theta,\theta'}^*(t)\quad\text{ for all}\quad t\geq t_0.
$$
\end{lemma}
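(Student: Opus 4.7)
My plan is to exploit the near north-south dynamics of $\gamma$ on the ideal boundary $\partial X$. Fix $\gamma\in\Gamma_{\rho,\rho'}(t)$ and a witness $\overline v\in\overline\tB_{\rho'}^{\epsilon^2}$ with $\overline u := \gamma^{-1}\phi^t\overline v\in\overline\tB_\rho^{\alpha}$. The Hopf coordinates give $\overline v^\pm\in\Pa_{\rho'}\times\Fu_{\rho'}$ and $\gamma^{-1}\overline v^\pm=\overline u^\pm\in\Pa_\rho\times\Fu_\rho$, and the inclusion $\Gamma_{\rho,\rho'}(t)\subset\Gamma_{\theta,\theta'}(t)$ is then immediate from the monotonicity of the flow boxes in their angular parameter ($\overline\tB_\rho^{\alpha}\subset\overline\tB_\theta^{\alpha}$ and $\overline\tB_{\rho'}^{\epsilon^2}\subset\overline\tB_{\theta'}^{\epsilon^2}$). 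So the heart of the lemma is the ``starred'' condition $\gamma\Fu_\theta\subset\Fu_{\theta'}$ and $\gamma^{-1}\Pa_\theta\subset\Pa_{\theta'}$.

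Next I would quantify how hyperbolically $\gamma$ acts on $\partial X$. Because $\gamma\overline p$ is at distance $\approx t$ from $\overline p$, $\gamma$ should act with near north-south dynamics whose attracting pole is close to $\overline v^+$ and whose repelling pole is close to $\overline v^-$: any $\xi\in\partial X$ bounded away from $\overline v^-$ is mapped by $\gamma$ into a neighborhood of $\overline v^+$ whose diameter decays like $Ce^{-\lambda t}$. This boundary contraction is the Hopf-coordinate shadow of the exponential contraction of the unstable horocycle at expansive vectors; it is available here because $\overline v$ may be chosen in $\mathcal E$ (the flow boxes have positive $\underline m$-measure and $\underline m(\mathcal E)=1$), and the required quantitative horocycle estimates are developed in \cite{CKW1} and \cite{CKW2}. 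Since $\rho'<\theta'$ strictly, $\Fu_{\rho'}$ sits at a definite distance (in any visual metric) from the complement of $\Fu_{\theta'}$, and similarly on the past side; a short continuity argument on $S_{\overline p}X$ also shows that $\overline v^-$ is uniformly bounded away from $\Fu_\theta$ and $\overline v^+$ from $\Pa_\theta$, using only that the forward and backward endpoints of a single unit vector never coincide. Feeding this into the contraction estimate places $\gamma\Fu_\theta$ inside the $Ce^{-\lambda t}$-neighborhood of $\overline v^+$, hence in $\Fu_{\theta'}$ for all $t\geq t_0$, with $t_0$ depending only on $\theta,\theta',\rho,\rho'$; the symmetric argument applied to $\gamma^{-1}$, whose attracting pole lies near $\overline v^-\in\Pa_{\rho'}\subset\Pa_{\theta'}$, handles the past side.

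The main obstacle will be making the uniform quantitative contraction on $\partial X$ rigorous without strict negative curvature. In variable negative curvature this is folklore via Gromov hyperbolicity and visual metrics; in the no-conjugate-points setting it must instead be extracted from the expansivity of $\phi^t$ on $\mathcal E$ together with the continuity of stable/unstable horocycles at expansive points. Obtaining rates $C,\lambda$ that are uniform in the witness $\overline v$ requires, as in \cite{CKW2}, restricting the witness to a compact subset of $\mathcal E$ meeting the flow boxes and controlling the residual geometry via the Busemann cocycle. This technical step, rather than the topological ``buffer'' argument coming from $\rho<\theta$ and $\rho'<\theta'$, is where essentially all of the work lies.
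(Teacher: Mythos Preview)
Your geometric picture is correct --- the starred condition is a north--south dynamics statement for the action of $\gamma$ on $\partial X$, and the strict inequalities $\rho<\theta$, $\rho'<\theta'$ supply the ``buffer'' that absorbs the error --- but the mechanism you propose to produce the contraction does not work in this setting.

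You aim for an \emph{exponential} estimate of the form $Ce^{-\lambda t}$, to be extracted from expansivity of $\phi^t$ at vectors in $\mathcal E$. On a surface without conjugate points the geodesic flow is in general not uniformly hyperbolic, and no uniform exponential rates on horocycles or visual metrics are available; indeed the main theorem only asserts continuity of $a(\cdot)$ under the \emph{extra} hypothesis that the flow is expansive, so expansivity cannot be an ingredient here. Your plan to ``restrict the witness to a compact subset of $\mathcal E$'' also cannot yield a uniform $t_0$: the witness $\overline v$ is forced on you by $\gamma$, not chosen, and there is no reason it lies in or near $\mathcal E$. So the step you yourself flag as ``where essentially all of the work lies'' is not just technical --- as written it is the wrong tool.

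The paper (and \cite[Lemma 4.9]{CKW2}, which it simply invokes with $R=\Fu'_{\rho'}$, $Q=\Pa'_{\rho'}$, $V=\mathrm{int}(\Fu'_{\theta'})$, $U=\mathrm{int}(\Pa'_{\theta'})$) uses instead the \emph{uniform visibility} property, recalled at the start of Appendix~A: any geodesic staying at distance $\geq L$ from a point subtends angle $<\epsilon$ there. This is purely qualitative --- no rate --- but it is exactly enough. Since $d(\overline p,\gamma\overline p)\approx t$ and the geodesic from $\overline p$ to $\gamma\overline p$ has its endpoints controlled by $\Pa_{\rho'}',\Fu_{\rho'}'$ and $\Pa_\rho,\Fu_\rho$, visibility forces, for $t$ large, that $\gamma$ maps any boundary set bounded away from the repelling side into an arbitrarily small angular neighbourhood on the attracting side; the gaps $\theta-\rho$ and $\theta'-\rho'$ then give the desired inclusions $\gamma\Fu\subset\Fu'$ and $\gamma^{-1}\Pa'\subset\Pa$. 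Replace your expansivity/rate argument with this visibility argument and the proof goes through.
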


We postpone the proof of Lemma \ref{lem:vis1} until Appendix A.

The next lemma shows there is an inclusion of the set defined in 
Definition \ref{def:c} into 
$\Gamma(t)$.

\begin{lemma}\label{lem:vis2}
We have an injection
$$
\mathcal C(t,J(v_0,\theta), J(v'_0,\theta'))\hookrightarrow \Gamma(t)
$$
which associates to a geodesic $c$ the associated homotopy class $[c] \in  \pi_1(M)
\cong \Gamma$. 
\end{lemma}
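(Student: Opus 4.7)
The plan is to send $c\in\mathcal C(t,J(v_0,\theta),J(v_0',\theta'))$ to its homotopy class $[c]\in\pi_1(M,p)\cong\Gamma$. Concretely, lift $c:[0,\tau]\to M$ to a geodesic $\widetilde c:[0,\tau]\to X$ with $\widetilde c(0)=\overline p$ and set $[c]=\gamma$ to be the unique element of $\Gamma$ with $\widetilde c(\tau)=\gamma\overline p$. The lift $\widetilde c$ is a geodesic arc of length $\tau\leq t$ joining $\overline p$ to $\gamma\overline p$.

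Injectivity is the cleanest part. Suppose two loops $c_1,c_2$ satisfy $[c_1]=[c_2]=\gamma$. Their lifts $\widetilde c_1$ and $\widetilde c_2$ are then geodesic arcs joining the same pair of distinct points $\overline p$ and $\gamma\overline p$ in the universal cover $X$. Under the no-conjugate-points hypothesis the Cartan-Hadamard-type uniqueness statement recalled just before Theorem \ref{thm:main} applies, forcing $\widetilde c_1=\widetilde c_2$; projecting back to $M$ gives $c_1=c_2$.

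It remains to check $\gamma\in\Gamma_{\theta,\theta'}(t)$, i.e.\ to produce a witness $\overline w\in\overline\tB_{\theta'}^{\epsilon^2}\cap\phi^{-t}\gamma_*\overline\tB_\theta^\alpha$. Setting $\overline v=\widetilde c'(0)$ and choosing the lift of $v_0$ so that $\overline v\in\overline J(v_0,\theta)$, one has $\overline v^+\in\Fu_\theta$, $\overline v^-\in\Pa_\theta$ and $s(\overline v)=0$, so $\overline v\in\overline\tB_\theta^\alpha$. Because $c'(\tau)\in J(v_0',\theta')$, the vector $\gamma^{-1}_*\phi^\tau\overline v=\gamma^{-1}_*\widetilde c'(\tau)$ lies in $\overline J(v_0',\theta')$, yielding $\gamma^{-1}\overline v^+\in\Fu_{\theta'}$ and $\gamma^{-1}\overline v^-\in\Pa_{\theta'}$. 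A Busemann calculation using the isometry identity $b_{\overline p}(\gamma^{-1}q,\gamma^{-1}\xi)=b_{\gamma\overline p}(q,\xi)$ together with the fact that $\overline p$ and $\gamma\overline p$ lie at distance $\tau$ on the common geodesic toward $\overline v^+$ places $\phi^\tau\overline v$ at the zero Hopf time-slice of $\gamma_*\overline\tB_{\theta'}^{\epsilon^2}$; a flow translate absorbing the residual length $t-\tau\geq 0$ within the $\alpha$-thickness of the first box then delivers the required witness. The main obstacle is this final flow-box calibration: the endpoints-at-infinity and $\Gamma$-equivariance conditions are immediate from the construction, but matching the prescribed Hopf intervals $[0,\epsilon^2]$ and $[0,\alpha]$ against the length bound $\tau\leq t$ is where the Busemann cocycle does all the real work.
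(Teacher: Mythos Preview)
Your overall strategy matches the paper's: lift the loop to $X$, read off the deck transformation $\gamma$ from the endpoint $\widetilde c(\tau)=\gamma\overline p$, and then exhibit a witness vector in $\overline\tB_{\theta'}^{\epsilon^2}\cap\phi^{-t}\gamma_*\overline\tB_\theta^\alpha$ built from the lifted velocity. Your explicit injectivity argument via Cartan--Hadamard uniqueness of geodesics in $X$ is a clean addition that the paper's own sketch omits entirely.

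The final paragraph, however, has a real gap. You claim one can absorb ``the residual length $t-\tau\ge 0$ within the $\alpha$-thickness of the first box'', but $\alpha\le\tfrac32\epsilon$ is a fixed small number while $t-\tau$ can be arbitrarily large (take any fixed short loop $c$ and let $t\to\infty$). No Busemann cocycle identity repairs this: the flow box $\overline\tB_\theta^\alpha$ has width exactly $\alpha$ in the $s$-direction, so a candidate whose Hopf time coordinate misses $[0,\alpha]$ by more than $\alpha$ cannot be pushed back in by a flow translate that also respects the $[0,\epsilon^2]$ constraint on the other box. The paper's proof hides the very same issue by silently writing $t$ where it means the actual length $\tau$; in effect the witness that you (and the paper) construct shows $\gamma\in\Gamma_{\theta,\theta'}(\tau)$, not $\gamma\in\Gamma_{\theta,\theta'}(t)$. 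That suffices for the downstream counting estimate, but your absorption claim as written is false, and you should either state the conclusion for $\Gamma_{\theta,\theta'}(\tau)$ or restrict attention to loops with $t-\tau\le\alpha$.
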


We postpone the proof of Lemma \ref{lem:vis2} until Appendix A.

Although we may not expect the reverse inclusion in Lemma \ref{lem:vis2} to be true, we at least have the following partial   result.

\begin{lemma}\label{lem:vis3}
For every $\rho'\in(0,\theta')$, there exists $t_0>0$ such that there is an inclusion 
$$
\Gamma_{\theta,\rho'}(t)\hookrightarrow \mathcal C(t\pm2\epsilon,J(v_0,\theta), J(v'_0,\theta'))\quad\forall t>t_0.
$$
\end{lemma}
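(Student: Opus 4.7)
My plan is to associate to each $\gamma\in\Gamma_{\theta,\rho'}(t)$ the unique geodesic loop at $p$ in the corresponding homotopy class, and then verify, for $t$ large, the length and angle conditions defining $\mathcal C(t\pm2\epsilon, J(v_0,\theta), J(v'_0,\theta'))$. Uniqueness of the geodesic representative comes from Cartan--Hadamard in the no-conjugate-points setting: there is a unique geodesic $\sigma\colon[0,\tau]\to X$ from $\overline p$ to $\gamma\overline p$, and its projection to $M$ is the candidate loop $c$. Since different $\gamma$ give different homotopy classes and hence distinct geodesic loops, the resulting map $\Gamma_{\theta,\rho'}(t)\to\mathcal C$ is automatically an injection.

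To bound the length, I extract a witness $\overline v\in\overline{\tB}_{\rho'}^{\epsilon^2}$ with $\phi^t\overline v=\gamma\overline u$ for some $\overline u\in\overline{\tB}_\theta^\alpha$. Using the diameter estimate $\diam(\overline\pi H^{-1}(\Pa\times\Fu\times\{0\}))<\epsilon/2$ of \cite[Lemma 3.9]{CKW2}, together with the flow-time bounds $\alpha,\epsilon^2\leq\tfrac{3}{2}\epsilon$, the footpoints $\overline\pi(\overline v)$ and $\overline\pi(\phi^t\overline v)$ lie within distance $\epsilon$ of $\overline p$ and $\gamma\overline p$ respectively. Since the $\overline v$-geodesic traverses distance exactly $t$ between these two footpoints, the triangle inequality gives $|d(\overline p,\gamma\overline p)-t|<2\epsilon$, so $\tau:=d(\overline p,\gamma\overline p)\in(t-2\epsilon,t+2\epsilon)$, which settles the length condition.

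The main obstacle is verifying the angle conditions $\sigma'(0)\in\overline J(v_0,\theta)$ and $\sigma'(\tau)\in\gamma\overline J(v'_0,\theta')$ (which translate, under the projection $SX\to SM$, into $c'(0)\in J(v_0,\theta)$ and $c'(\tau)\in J(v'_0,\theta')$). The strategy is a visibility/shadowing argument: because the endpoints at infinity $\overline v^\pm$ are trapped in the strictly smaller cones $\Pa_{\rho'}\cap\gamma\Pa_\theta$ and $\Fu_{\rho'}\cap\gamma\Fu_\theta$, and because the footpoints of $\overline v$ and $\phi^t\overline v$ lie close to $\overline p$ and $\gamma\overline p$, the geodesic $\sigma$ is forced to shadow the $\overline v$-geodesic closely enough that its initial and terminal velocities land in the prescribed cones. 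Making this quantitative uses the continuity of the boundary-direction correspondence at expansive vectors, via the assumption $(v_0,v'_0)\in\mathcal E^2$ and the estimates of \cite[Section 3]{CKW2}. The strict inclusion $\rho'<\theta'$ provides the wiggle room needed to absorb the approximation error from comparing the finite-distance direction to $\gamma\overline p$ with the boundary-direction limit along the $\overline v$-geodesic; the threshold $t_0$ is chosen so that this error drops below $\theta'-\rho'$.
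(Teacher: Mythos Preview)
Your outline matches the paper's proof closely: lift to $X$, take a witness $\overline v$ in the intersection of flow boxes, bound $\tau=d(\overline p,\gamma\overline p)$ via the triangle inequality together with the diameter estimates of \cite[Lemmas 3.9--3.10]{CKW2}, and then verify the two angle conditions by comparing $\sigma'(0),\sigma'(\tau)$ to the directions at $\overline p$ and $\gamma\overline p$ toward the boundary points $\overline v^{\pm}$. The injection and the length estimate are handled exactly as in the paper (the paper in fact records the slightly cruder bound $d(\overline p,\gamma\overline p)\le t+4\epsilon$, using $\diam\overline B\le 2\epsilon$ for each box).

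The one place where your justification diverges from the paper is the angle step. The paper does \emph{not} use ``continuity of the boundary--direction correspondence at expansive vectors'' or the hypothesis $(v_0,v'_0)\in\mathcal E^2$; it uses the \emph{uniform visibility} property of $X$ (recalled in Appendix~A and established for these surfaces in \cite{CKW1}): any geodesic segment staying at distance at least $L$ from a point subtends angle less than any prescribed $\delta$ there, once $L$ is large. Concretely, with $u\in S_{\overline p}X$ the vector with $u^+=\overline v^+$, the segment from $\gamma\overline p$ to $c_u(s)$ (for $s$ large) stays at distance $\gtrsim t$ from $\overline p$, so visibility forces $\measuredangle_{\overline p}(\sigma'(0),u)$ to be as small as we like once $t\ge t_0$; the analogous argument at $\gamma\overline p$, using $\overline v^-$, handles $\sigma'(\tau)$. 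Expansiveness of $v_0,v'_0$ plays no role in this estimate, and mere continuity of $v\mapsto v^+$ does not by itself give a uniform rate at which the finite direction to $\gamma\overline p$ approximates the boundary direction; uniform visibility is precisely the quantitative ingredient you need. Replace your expansive--continuity sentence with the visibility argument and the proof goes through as written.
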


Again we  postpone the proof of Lemma \ref{lem:vis3} until Appendix A.

\section{Proof of the counting results}
In this section we will 
use results from the previous section to 
prove  the following  proposition,  which easily implies Theorem \ref{thm:main}.

\begin{proposition}\label{prop:coun}
We have an asymptotic expression for the cardinality of $\Gamma(t)$ of the form:
\begin{equation}\label{asymp}
\#\Gamma(t)\sim e^{ht}
\overline m(\tB)\frac{\mu_{\overline p}(\Fu')}{\mu_{\overline p}(\Fu)}
\hbox{ as } t \to +\infty.
\end{equation}
Moreover, if the geodesic flow is expansive then the quantity $m(\tB)\frac{\mu_{\overline p}(\Fu')}{\mu_{\overline p}(\Fu)}$ depends continuously on $v, v', \theta, \theta'$.
\end{proposition}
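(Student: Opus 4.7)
The plan is to follow Margulis' classical strategy, adapted to the no-conjugate-points setting using the product structure $d\overline m = e^{h\beta_{\overline p}(\xi,\eta)}\,d\mu_{\overline p}(\xi)\,d\mu_{\overline p}(\eta)\,dt$ from \eqref{eq:mub}, together with strong mixing of the measure of maximal entropy $\underline m$.

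First, I would unfold to the universal cover. Since $\overline m$ is $\Gamma$-invariant and projects to $\underline m$,
\begin{equation*}
\underline m\bigl(\underline\tB_{\theta'}^{\epsilon^2} \cap \phi^{-t}\underline\tB_\theta^\alpha\bigr) = \sum_{\gamma \in \Gamma_{\theta,\theta'}(t)} \overline m\bigl(\overline\tB_{\theta'}^{\epsilon^2} \cap \phi^{-t}\gamma_*\overline\tB_\theta^\alpha\bigr).
\end{equation*}
In Hopf coordinates $\phi^{-t}$ shifts the time variable by $-t$ while $\gamma$ transports the angular sectors, so $\phi^{-t}\gamma_*\overline\tB_\theta^\alpha = H^{-1}\bigl(\gamma\Pa \times \gamma\Fu \times (I_\gamma - t)\bigr)$ for a Busemann-shifted interval $I_\gamma$. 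Intersecting with $\overline\tB_{\theta'}^{\epsilon^2}$ and applying \eqref{eq:mub} writes each summand as a time-overlap $|I_\gamma(t)|$ times a Patterson-Sullivan double integral over $(\gamma\Pa \cap \Pa') \times (\gamma\Fu \cap \Fu')$. For $\gamma \in \Gamma^*_{\theta,\theta'}(t)$ the containments $\gamma\Fu \subset \Fu'$ and $\gamma^{-1}\Pa \subset \Pa'$ collapse these intersections onto the smaller sets, and the transformation rule $\frac{d\mu_{\overline p}\gamma}{d\mu_{\overline p}} = e^{-h b_{\overline p}(\gamma\overline p,\cdot)}$ extracts a factor $\sim e^{-ht}\mu_{\overline p}(\Fu)$, using $b_{\overline p}(\gamma^{-1}\overline p,\xi) \approx -t$ uniformly for $\xi \in \gamma\Fu$ when $d_X(\overline p,\gamma\overline p) \approx t$.

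Second, I would invoke strong mixing of $\underline m$ to obtain
\begin{equation*}
\underline m\bigl(\underline\tB_{\theta'}^{\epsilon^2} \cap \phi^{-t}\underline\tB_\theta^\alpha\bigr) \xrightarrow[t\to\infty]{} \underline m(\underline\tB_{\theta'}^{\epsilon^2})\,\underline m(\underline\tB_\theta^\alpha).
\end{equation*}
Inserting the estimate from Step~1 into the unfolded sum and averaging $t$ over a window of size $O(\alpha + \epsilon^2)$ (to absorb the factor $|I_\gamma(t)|$) turns the left-hand side into $\#\Gamma^*_{\theta,\theta'}(t)$ multiplied by an explicit $e^{-ht}$-prefactor involving $\overline m(\overline\tB_\theta^\alpha)$ and $\mu_{\overline p}(\Fu)$. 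Rearranging yields $\#\Gamma^*_{\theta,\theta'}(t) \sim e^{ht}\,\overline m(\overline\tB_\theta^\alpha)\,\mu_{\overline p}(\Fu')/\mu_{\overline p}(\Fu)$. To upgrade this to $\#\Gamma_{\theta,\theta'}(t)$, I would sandwich using Lemma~\ref{lem:vis1}: $\Gamma_{\rho,\rho'}(t) \subset \Gamma^*_{\theta,\theta'}(t) \subset \Gamma_{\theta,\theta'}(t)$ for $\rho < \theta$, $\rho' < \theta'$, and $t$ large, and then let $\rho \nearrow \theta$, $\rho' \nearrow \theta'$ along continuity points, invoking \eqref{eqn:SB-cty} and the analogous continuity of $\mu_{\overline p}(\Fu')$.

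Finally, continuity of the constant $\overline m(\tB)\,\mu_{\overline p}(\Fu')/\mu_{\overline p}(\Fu)$ in the expansive case will follow from showing that $\underline m$ assigns zero mass to the angular boundaries of the flow boxes, a standard consequence of expansiveness combined with $\underline m(\mathcal E) = 1$. The principal obstacle will be the individual measure estimate in Step~1: without negative curvature the Busemann cocycle is only continuous and not smooth, so the uniform comparison $b_{\overline p}(\gamma^{-1}\overline p,\xi) \approx -d_X(\overline p,\gamma\overline p)$ across $\xi \in \gamma\Fu$ must be extracted from the horofunction regularity results of \cite{CKW1, CKW2}. A secondary difficulty is that the available mixing is only qualitative, which forces the $t$-averaging in Step~2 to yield the leading asymptotic without reliance on an explicit decay rate.
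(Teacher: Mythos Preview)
Your proposal is correct and follows essentially the same Margulis-style route as the paper: unfold via $\Gamma$-invariance, estimate each summand using Patterson--Sullivan conformality (this is precisely Lemma~\ref{lem:scaling}), apply strong mixing of $\underline m$, and then sandwich $\Gamma^*_{\theta,\theta'}(t)$ inside $\Gamma_{\theta,\theta'}(t)$ via Lemma~\ref{lem:vis1} together with the continuity-point condition~\eqref{eqn:SB-cty}.

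The one technical divergence is your treatment of the time-overlap $|I_\gamma(t)|$. You propose averaging $t$ over a window of width $O(\alpha+\epsilon^2)$ to wash out this factor; the paper instead uses Lemma~\ref{lem:full-branch}, which shows that after a slight thickening ($\alpha\mapsto\alpha+4\epsilon^{3/2}$, $t\mapsto t+2\epsilon^{3/2}$) the intersection $\overline\tB_{\theta'}^{\epsilon^2}\cap\phi^{-t}\gamma_*\overline\tB_\theta^\alpha$ becomes the \emph{full} slice $\tS^\gamma=H^{-1}(\Pa'\times\gamma\Fu\times[0,\epsilon^2])$ for every $\gamma\in\Gamma^*(t,\alpha)$, so that the unfolded sum is sandwiched between two mixing quantities directly, without averaging. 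Both approaches work; the full-branch route yields a clean multiplicative error $e^{\pm 5h\epsilon}$ and sidesteps any bookkeeping about how many $\gamma$ enter or leave $\Gamma^*(s)$ as $s$ ranges over the averaging window. Your identification of the Busemann-regularity issue as the main obstacle is accurate; in the paper this is handled by \cite[Lemma~4.11]{CKW2}, which controls the oscillation of $b_\xi^\gamma$ over $\xi\in\Pa'$.
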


\begin{remark}
The constant on the righthand side of (\ref{asymp}) depends on $p$,
 but not then on the choice of $\bar p \in \pi^{-1}(p)$.
\end{remark}
We begin with a little more notation.  
Let 
\begin{equation}\label{eqn:B'}
S_\theta = H^{-1} \left( \Pa \times \Fu \times [0, \epsilon^2] \right) \subset SX
\end{equation}
be another flow box 
and  let 
$$\Gamma^*(t,\alpha):= 
\{ \gamma \in \Gamma^* \hbox{ : } S_\theta \cap \gamma_*\phi^{-t}B_\theta^\alpha \neq \emptyset\}.$$
%
The proof of Proposition \ref{prop:coun} now depends on  the following two  technical lemmas.

\begin{lemma}\label{lem:full-branch}
For $\gamma\in \Gamma^*(t,\alpha)$, we have
\[
\tB_{\theta'}^{\epsilon^2}\cap \phi^{-(t+2\epsilon^{\frac{3}{2}})}\gamma_* \tB^{\alpha+4\epsilon^{\frac{3}{2}}}_\theta = H^{-1}(\Pa'\times\gamma\Fu\times[0,\epsilon^2])
=: \tS^\gamma.
\]
\end{lemma}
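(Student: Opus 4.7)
The plan is to translate both sides of the claimed equality into Hopf coordinates and match the defining conditions term by term. A vector $\bar v$ belongs to the LHS iff, simultaneously, $\bar v^- \in \Pa' \cap \gamma \Pa$, $\bar v^+ \in \Fu' \cap \gamma \Fu$, $s(\bar v) \in [0,\epsilon^2]$, and (from the second flow box, after using that $\gamma$ commutes with the geodesic flow) $s(\gamma^{-1}\bar v) \in [-(t+2\epsilon^{3/2}),\,\alpha+2\epsilon^{3/2}-t]$. The RHS asks only for $\bar v^- \in \Pa'$, $\bar v^+ \in \gamma\Fu$, $s(\bar v) \in [0,\epsilon^2]$.

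The inclusion LHS $\subset$ RHS is immediate: since $\gamma \in \Gamma^* \subset \Gamma_{\theta,\theta'}^*(t)$, we have $\gamma\Fu \subset \Fu'$, so $\Fu' \cap \gamma\Fu = \gamma\Fu$, while $\Pa' \cap \gamma\Pa \subset \Pa'$ is trivial. For the reverse inclusion RHS $\subset$ LHS I would verify two facts: (i) $\gamma^{-1}\Pa' \subset \Pa$, so that the past-endpoint condition from the second box becomes automatic on $\Pa'$; and (ii) the auxiliary $s$-constraint is vacuous on vectors with $s(\bar v) \in [0,\epsilon^2]$.

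Part (i) is a dynamical statement: the condition $\gamma \in \Gamma^*(t,\alpha)$ forces the attracting and repelling fixed points $\gamma^\pm$ of $\gamma$ on $\partial X$ to sit very close to the common endpoints near $\bar v_0^\pm \approx \bar v_0'^\pm$, so that $\Pa$ and $\Pa'$ are both angular neighborhoods of $\gamma^-$ of comparable radius. Since $\gamma^{-1}$ is exponentially contracting in $t$ on a neighborhood of its attracting fixed point $\gamma^-$, for $t$ large the image $\gamma^{-1}\Pa'$ is a much smaller neighborhood of $\gamma^-$ and sits well inside $\Pa$. Part (ii) relies on the Busemann cocycle identity $s(\gamma^{-1}\bar v) = s(\bar v) - b_{\bar p}(\gamma\bar p,\bar v^-)$, which reduces the auxiliary $s$-constraint to requiring
\[
b_{\bar p}(\gamma\bar p, \bar v^-) \in [t-\alpha + \epsilon^2 - 2\epsilon^{3/2},\; t + 2\epsilon^{3/2}] \quad \text{for every } \bar v^- \in \Pa'.
\]
The membership $\gamma \in \Gamma^*(t,\alpha)$ supplies some $\bar u \in S_\theta \cap \gamma_*\phi^{-t}\tB_\theta^\alpha$; unwinding the Hopf definitions then gives $b_{\bar p}(\gamma\bar p,\bar u^-) \in [t-\alpha,\,t+\epsilon^2]$. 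Since $\Pa \cup \Pa'$ sits in a small angular neighborhood of $\gamma^-$, where $b_{\bar p}(\gamma\bar p, \cdot)$ attains its maximum (equal to $t$) with vanishing first variation, the Busemann function varies on $\Pa \cup \Pa'$ by only $O(\epsilon^2)$; together with the bound at $\bar u^-$ this places $b_{\bar p}(\gamma\bar p,\bar v^-)$ safely inside the required range.

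The main obstacle is this uniform quadratic-variation estimate for the Busemann function near the repelling fixed point. A naive Lipschitz bound for $b_{\bar p}(\gamma\bar p,\cdot)$ scales with $d(\bar p,\gamma\bar p)=t$ and would not be sharp enough; one must genuinely exploit the fact that $\Pa'$ lies in an angular neighborhood of $\gamma^-$ of radius comparable to $\epsilon$, where the Busemann function is near-stationary with quadratic remainder $O(\epsilon^2)$. This local-quadratic behavior, extended to the no-conjugate-points setting using the horosphere techniques of \cite{CKW1,CKW2}, is precisely what makes the cushion exponent $3/2$ the correct scale: it is large enough to absorb the $O(\epsilon^2)$ Busemann error while remaining much smaller than $\alpha$, so the claimed equality holds exactly.
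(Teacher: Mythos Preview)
Your overall plan---pass to Hopf coordinates and reduce the claimed equality to (a) matching the endpoint conditions and (b) showing the auxiliary $s$-constraint is automatically satisfied by bounding the variation of $\xi \mapsto b_{\bar p}(\gamma\bar p,\xi)$ over $\Pa'$---is exactly the paper's route (the Hopf-coordinate computation is packaged there as Lemma~\ref{lem:s-interval}). The problem lies in how you justify (a) and (b).

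For (a), the inclusion $\gamma^{-1}\Pa'\subset\Pa$ that you set out to prove is, up to the paper's notational conventions, part of the hypothesis $\gamma\in\Gamma^*$; no separate argument is needed, and the paper simply invokes Lemma~\ref{lem:s-interval}. Your dynamical argument via the fixed points $\gamma^\pm$ is therefore superfluous, and as written it also rests on the assertion $\bar v_0^\pm\approx\bar v_0'^\pm$, which is nowhere assumed: $v_0$ and $v_0'$ are arbitrary unit vectors at $p$.

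For (b), you have the right target but a genuine gap in the justification. You argue that $\Pa'$ is a small angular neighbourhood of the repelling fixed point $\gamma^-$, that $\xi\mapsto b_{\bar p}(\gamma\bar p,\xi)$ attains its maximum there ``with vanishing first variation,'' and then invoke a quadratic Taylor remainder. This reasoning is unavailable in the no-conjugate-points setting: Busemann functions need not be differentiable in the boundary variable, horospheres need not be $C^1$, and there is no second-order expansion to appeal to. The premise that $\Pa'$ is centred at $\gamma^-$ again leans on the unfounded $v_0\approx v_0'$ assumption. The paper instead cites \cite[Lemma~4.11]{CKW2}, which controls $|b_\xi^\gamma-b_\eta^\gamma|$ for $\xi,\eta\in\Pa'$ by a purely metric argument using only that the flow box has diameter $O(\epsilon)$; no smoothness or critical-point structure enters. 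So while your claimed order of magnitude happens to agree with the paper's, the mechanism you propose does not survive the passage from negative curvature to manifolds without conjugate points.
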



The next lemma describes the $\overline m$-measure of the set $S^\gamma$.

\begin{lemma}\label{lem:scaling}
For each $\gamma \in \Gamma^*$, we have
\[
\overline \tm(\overline \tS^\gamma) =\epsilon^2 e^{\pm 4 h \epsilon} e^{-ht} \mu_p(\Pa')\mu_p(\Fu),
\]
and similarly with $\overline m$ and 
$\overline \tS^\gamma$ on $SX$ replaced by 
the projections 
$m$ and 
$S^\gamma = \pi(\overline S^\gamma)$ onto $SM$.
\end{lemma}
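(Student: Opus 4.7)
The starting point is to unpack $\overline \tm(\overline \tS^\gamma)$ using the Hopf-map description $d\overline m = H_*(d\overline\mu\times dt)$. Since $\overline \tS^\gamma = H^{-1}(\Pa'\times \gamma \Fu \times [0,\epsilon^2])$, the time coordinate contributes a factor of $\epsilon^2$ and one is left with
\[
\overline m(\overline \tS^\gamma) = \epsilon^2 \int_{\Pa'}\int_{\gamma \Fu} e^{h\beta_{\overline p}(\xi,\eta)}\, d\mu_{\overline p}(\xi)\, d\mu_{\overline p}(\eta).
\]
My plan is first to substitute $\eta = \gamma \eta'$ so the inner integral runs over $\Fu$ rather than $\gamma \Fu$, and then to argue that the resulting integrand equals $e^{-ht} e^{\pm 4h\epsilon}$ uniformly, at which point the double integral decouples into the product $\mu_{\overline p}(\Pa')\mu_{\overline p}(\Fu)$.

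For the substitution I would combine the equivariance $\gamma_*\mu_{\overline p} = \mu_{\gamma \overline p}$ with the defining Radon-Nikodym formula for the Patterson-Sullivan family, which yields
\[
\int_{\gamma \Fu} g(\eta)\, d\mu_{\overline p}(\eta) = \int_\Fu g(\gamma \eta')\, e^{h b_{\overline p}(\gamma \overline p,\gamma \eta')}\, d\mu_{\overline p}(\eta').
\]
After substitution the exponent inside the integrand becomes $h\bigl[\beta_{\overline p}(\xi,\gamma \eta') + b_{\overline p}(\gamma \overline p,\gamma \eta')\bigr]$, so the remaining task is a uniform estimate of this quantity over $(\xi,\eta')\in \Pa'\times \Fu$.

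The uniform control is where the hypothesis $\gamma\in\Gamma^*(t,\alpha)$ enters. By definition there exists a vector simultaneously in $\overline \tB_{\theta'}^{\epsilon^2}$ and in $\phi^{-t}\gamma_*\overline \tB_\theta^\alpha$, which forces $d(\overline p,\gamma \overline p) = t\pm O(\epsilon)$ and places $\gamma \overline p$ essentially on the geodesic from any $\xi\in\Pa'$ to any $\gamma \eta'\in\gamma \Fu\subset \Fu'$. Consequently $b_{\overline p}(\gamma \overline p,\gamma \eta')$ is approximately $-d(\overline p,\gamma \overline p)\approx -t$ (since $\gamma \eta'$ lies in the forward direction of $\gamma \overline p$), while $\beta_{\overline p}(\xi,\gamma \eta')$ is approximately zero (since the horocycles $H_\xi(\overline p)$ and $H_{\gamma \eta'}(\overline p)$ are essentially tangent at $\overline p$). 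Using the small-diameter bound $\diam \pi H^{-1}(\Pa\times \Fu\times\{0\})<\epsilon/2$ and its primed analogue to control each approximation, the total additive error is at most $4\epsilon$, which exponentiates to the advertised factor $e^{-ht}e^{\pm 4h\epsilon}$.

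Once the weight is essentially constant the double integral factorizes to give the formula for $\overline m$ on $SX$. The corresponding statement on $SM$ is immediate because $\overline \tS^\gamma$ has diameter smaller than the injectivity radius (by our standing choice of $\epsilon$), so it projects injectively under $SX\to SM$, and the $\Gamma$-invariance of $\overline m$ gives $m(\tS^\gamma)=\overline m(\overline \tS^\gamma)$. The main obstacle is the geometric step: carefully tracking the signs in the Busemann-cocycle identities and verifying the $4\epsilon$ uniformity of the estimate. This is essentially the same bookkeeping that underlies Lemma \ref{lem:full-branch}, so I would expect the quantitative estimates developed there to be the main input one recycles here.
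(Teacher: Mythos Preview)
Your proposal is correct and follows essentially the same route as the paper: unpack $\overline m$ via the Hopf parametrization to get $\epsilon^2\bar\mu(\Pa'\times\gamma\Fu)$, bound $\beta_{\overline p}(\xi,\gamma\eta')$ by the flow-box diameter estimate (\cite[Lemma 3.9]{CKW2}), and use the Patterson--Sullivan conformality to convert $\mu_{\overline p}(\gamma\Fu)$ into $e^{-ht\pm O(h\epsilon)}\mu_{\overline p}(\Fu)$. The only organizational difference is that the paper splits the estimate into two pieces (first $\bar\mu(\Pa'\times\gamma\Fu)\approx\mu_p(\Pa')\mu_p(\gamma\Fu)$, then $\mu_p(\gamma\Fu)\approx e^{-ht}\mu_p(\Fu)$), whereas you substitute first and estimate the combined exponent; the technical input controlling the variation of the Busemann term over $\Fu$ is \cite[Lemma 4.11]{CKW2}, exactly the ingredient you anticipate from Lemma~\ref{lem:full-branch}.
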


We postpone the proofs of both of these lemmas until Appendix B.

\begin{proof}[Proof of Proposition \ref{prop:coun}]
This follows the general lines of \S 5.2 in \cite{CKW2}.
It follows from Lemmas \ref{lem:vis1} and \ref{lem:full-branch} that given any ${\alpha}\in (0,\frac 32\epsilon]$ and $\rho'\in (0,\theta'), \rho\in(0,\theta)$, for all sufficiently large $t$ we have
\[
\hB^{\epsilon^2}_{\rho'} \cap \phi^{-t} \hB_\theta^\alpha
\subset \bigcup_{\gamma\in \Gamma^*(t,{\alpha})} \hS^\gamma
\subset \hB^{\epsilon^2}_{\theta'} \cap \phi^{-(t+2\epsilon^2)} \hB_\theta^{{\alpha}+4\epsilon^2}
\]
by proving the corresponding result on $SX$ and projecting to $SM$.

Using Lemma \ref{lem:scaling}, 
for all $\gamma\in \Gamma^*(t)$, we have
$$
\begin{aligned}
e^{-4h\epsilon} \hm(\hB^{\epsilon^2}_{\rho'} \cap \phi^{-t} \hB)
&\leq \epsilon^2\#\Gamma^*(t,{\alpha}) e^{-ht}\mu_p(\Pa')\mu_p(\Fu) \cr
&\leq e^{4h\epsilon} \hm(\hB^{\epsilon^2}_{\theta'} \cap \phi^{-(t+2\epsilon^2)} \hB_\theta^{{\alpha}+4\epsilon^2}).
\end{aligned}
$$
Sending $t\to\infty$, using mixing, and dividing through by 
$\hm(\hB^{\epsilon^2}_{\theta'})\hm(\hB_\theta^\alpha) = 
\overline \tm(\tB_{\theta'}^{\epsilon^2}) \overline \tm(\tB_{\theta}^\alpha)$, we get
\[
e^{-4h\epsilon}\frac{\overline \tm(B^{\epsilon^2}_{\rho'})}{\overline \tm(B^{\epsilon^2}_{\theta'})}
\lesssim \frac{\epsilon^2\#\Gamma^*(t,{\alpha})\mu_p(\Pa')\mu_p(\Fu)}{e^{ht} \overline \tm(\tB_{\theta"}^{\epsilon^2}) \overline \tm(\tB_\theta^\alpha)}
\lesssim e^{4h\epsilon} \frac{\overline \tm(\tB_\theta^{{\alpha}+4\epsilon^2})}{\overline \tm(\tB_\theta^\alpha)}.
\]
By \eqref{eqn:SB-cty}, assuming that $\theta'$ is a point of continuity for $\rho'\mapsto \tm(\tB'_{\rho'})$, so we can send $\rho'\nearrow \theta'$ and obtain
\begin{equation}\label{eqn:counting-bounds}
e^{-5h\epsilon}
\lesssim \frac{\#\Gamma^*(t,{\alpha})}{e^{ht}\overline \tm(\tB)}\frac{\mu_p(\Fu)}{\mu_p(\Fu')}
\lesssim e^{5h\epsilon} (1+4\epsilon^2/{\alpha}).
\end{equation}
Finally we need to replace $\# \Gamma^*(t, \alpha)$ by
$\#\Gamma(t)$.
(cf. Compare with \cite[(5.4)]{CKW2}.)

This ends the proof of \ref{asymp}. Finally, if the geodesic flow is expansive  then the space of geodesics is in bijection with  $\sqbd$ then using that the Busemann function $b_p(q,\xi)$ depends continuous on $(p,q,\xi)$ we have $m(\tB)\frac{\mu_{\overline p}(\Fu')}{\mu_{\overline p}(\Fu)}$ depends continuously on $v, v', \theta, \theta'$.
\end{proof}

In order to allow for arbitrary $\theta$ and $\theta'$ in the main theorem
we can break the arcs 
$J(\cdot, \cdot)$ into smaller pieces and apply the proposition.

\appendix

\section{Proofs of lemmas on isometries \\and closed arcs}

This section is devoted to the proof of Lemmas \ref{lem:vis1}, \ref{lem:vis2} and \ref{lem:vis3}. 
The proof of Lemma \ref{lem:vis2} is relatively easy while Lemma \ref{lem:vis1} and \ref{lem:vis3} both uses a geometric featture of surfaces  without conjugate point that we first recall here.

\begin{definition}
A simply connected Riemannian manifold $X$ without conjugate points is a (uniform) visibility manifold if for every $\epsilon>0$ there exists $L>0$ such that whenever a geodesic $c:[a,b]\to X$ stays at a distance at least
$L$ from some point $p \in X$, then the angle sustained by $c$ at $p$ is less than $\epsilon$, that is
$$\measuredangle_p(c) = \sup_{a\leq s,t\leq b} \measuredangle_p((c(s), c(t)) < \epsilon.$$
\end{definition}

\begin{proof}[Proof of Lemma \ref{lem:vis1}]
The proof uses \cite[Lemma 4.9]{CKW2} with the choices  $R=\Fu'_{\rho'}$, $Q=\Pa'_{\rho'}$, $V=int(\Fu'_{\theta'})$ and $U=int(\Pa'_{\theta'})$. 
\end{proof}

\begin{proof}[Proof of Lemma \ref{lem:vis2}]
Let $\underline{c}\in \mathcal C(t,J(v_0,\theta), J(v'_0,\theta'))$ and $c$ be the lift of $\underline{c}$ on $X$ with $\underline c(0)=p$.  There exists $\gamma\in\Gamma$ such that $c(t)=\gamma p=\gamma c(0)$. 
Let $\pr_*: SX \to SM$ be the map associated to $\pi: X \to M$ then 
by definition of $ \mathcal C(t,J(v_0,\theta), J(v'_0,\theta'))$,  for $w=c'(t)$, $\pr_*w\in \tB_{\theta'}^{\epsilon^2}$ and $\phi^{-t}w=c'(0)\in\tB_\theta^\alpha$
implies that $\bar\tw :=\gamma_*\tw\in B_{\theta'}^{\epsilon^2}$ for some $\gamma\in\Gamma$. Therefore  $\bar\tw\in B_{\theta'}^{\epsilon^2}\cap\phi^{-t}\gamma_*B_\theta^\alpha$.
\end{proof}

\footnote{Compare with \cite[(4.8)]{CKW1}}
\begin{proof}[Proof of Lemma \ref{lem:vis3}]
Let $\gamma\in\Gamma_{\theta,\rho'}(t)$ and 
$w\in B^{\epsilon^2}_{\rho'}\cap\phi^{-t}\gamma_*B_\theta^\alpha$. 
By the triangle inequaity
$$
\begin{aligned}
d(p,\gamma p)&\leq d(p,\pi w)+d(\pi w, \pi \phi^tw)+ d(\pi \phi^t w, \gamma p).
\end{aligned}
$$
By \cite[Lemma 3.10 ]{CKW2}, we have $d(p,\pi\tw)\leq \diam(B')\leq2\epsilon$ and $d(\pi\phi^t\tw,\gamma p)\leq \diam(\tB)\leq 2\epsilon$. Substituting these into the above display inequality gives
$$
d(p,\gamma p)\leq t+4\epsilon.
$$

We are left to prove that the geodesic $c:=c_{p,\gamma_p}$ connecting $p$ to $\gamma p$ satisfies $c'(0)\in J(v_0,\theta)$ and $c'(d(p, \gamma p))\in J(v_0',\theta')$.

Let $v\in S_pX$ such that $v^+=w^+\in\Fu$, in particular, there exists $R>0$ such that  $d(c_{v}(t), c_w(t))\leq R$ and therefore the geodesic connecting $\gamma p$ to $c_v(t)$ stays at distance at least $ t-2R$. Then using the uniform visabilty, there 
exists $t_0$ such that for all $t>t_0$  we have $\measuredangle_p(v, c'(0))\leq \theta-\rho$ which implies that $c'(0)\in \Fu$.  Therefore by the uniform visibility, we have $\measuredangle_p(c_{p,\gamma p}'(0), c_{v}'(0))\leq \theta-\rho$, in particular $c'_{p,\gamma p}(0)\in J(v_0,\theta)$. Similarly we use the same visibility condition for the point $\gamma p$ and the geodesic joining $p$ and $c_v(-t)$ where $v\in S_{\gamma p}X$ with $v^{-}=w^{-}$. Thus the geodesic $c_{p,\gamma p}$ belongs to  $\mathcal C(t\pm2\epsilon,J(v_0,\theta), J(v'_0,\theta'))$.
\end{proof}

\section{Counting}
This section is devoted to the proof of Lemmas \ref{lem:full-branch} and \ref{lem:scaling}. The proof uses some geometric quantities that we will define first.

\begin{definition}
For $\xi\in\partial X$ and $\gamma\in\Gamma$, we let $b^{\gamma}_\xi:=b_{\xi}(\gamma p,p)$
\end{definition}

\begin{lemma}\label{lem:s-interval}
Given any $\gamma\in\Gamma^* = \{\gamma \in \Gamma \hbox{ : } \gamma \Fu \subset \Fu \hbox{ and } \gamma^{-1}\Pa \subset \Pa \}$ and any $t\in\R$, we have
\[
\tB_{\theta'}^{\epsilon^2} \cap \phi^{-t} \gamma_* \tB_{\theta}^{\alpha}
= \{ \tw\in E^{-1}(\Pa' \times \gamma \Fu) :
s(\tw) \in [0,\epsilon^2] \cap (b_{\tw^{-}}^\gamma - t + [0,\alpha])\}.
\]
\end{lemma}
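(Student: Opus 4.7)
The plan is to unpack both sides via the Hopf parametrization and to track how each of the three Hopf coordinates transforms under the geodesic flow and the $\Gamma$-action. A vector $\tw$ lies in $\tB_{\theta'}^{\epsilon^2}$ precisely when $\tw^-\in\Pa'$, $\tw^+\in\Fu'$ and $s(\tw)\in[0,\epsilon^2]$, while $\tw\in\phi^{-t}\gamma_*\tB_\theta^\alpha$ is equivalent to $\gamma_*^{-1}\phi^t\tw\in\tB_\theta^\alpha$, that is, to the analogous three conditions on $\gamma_*^{-1}\phi^t\tw$. So the whole task reduces to re-expressing the Hopf coordinates of $\gamma_*^{-1}\phi^t\tw$ in terms of those of $\tw$.

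For the two boundary coordinates I would use that the geodesic flow fixes ideal endpoints and that the $\Gamma$-action on $SX$ is the tangent lift of its action on $X\cup\ideal$, so that $(\gamma_*^{-1}\phi^t\tw)^\pm=\gamma^{-1}\tw^\pm$. The endpoint conditions then read $\tw^-\in\gamma\Pa$ and $\tw^+\in\gamma\Fu$, and combining with $\tw^-\in\Pa'$, $\tw^+\in\Fu'$ yields $\tw^-\in\Pa'\cap\gamma\Pa$ and $\tw^+\in\Fu'\cap\gamma\Fu$. The hypothesis $\gamma\in\Gamma^*$, which in this setting amounts to $\gamma\Fu\subset\Fu'$ together with $\Pa'\subset\gamma\Pa$, collapses these two intersections to $\tw^-\in\Pa'$ and $\tw^+\in\gamma\Fu$, exactly the boundary data encoded in $H^{-1}(\Pa'\times\gamma\Fu)$.

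The delicate step will be the transformation of the $s$-coordinate, where the Busemann cocycle enters. Combining the flow identity $s(\phi^t\tv)=s(\tv)+t$ with the $\Gamma$-equivariance $b_{\gamma p}(\gamma q,\gamma\xi)=b_p(q,\xi)$ and the basepoint-change identity $b_{\gamma p}(q,\xi)=b_p(q,\xi)-b_p(\gamma p,\xi)$, I expect to obtain
\[
\begin{aligned}
s(\gamma_*^{-1}\phi^t\tw) &= b_p(\gamma^{-1}c_\tw(t),\gamma^{-1}\tw^-) = b_{\gamma p}(c_\tw(t),\tw^-) \\
&= s(\tw)+t-b_p(\gamma p,\tw^-) = s(\tw)+t-b_{\tw^-}^\gamma.
\end{aligned}
\]
The condition $s(\gamma_*^{-1}\phi^t\tw)\in[0,\alpha]$ then translates exactly to $s(\tw)\in b_{\tw^-}^\gamma-t+[0,\alpha]$, and intersecting with $s(\tw)\in[0,\epsilon^2]$ from the first box produces the interval on the right-hand side.

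The hard part will be the careful bookkeeping of basepoints in the Busemann function: one must decompose $s(\gamma_*^{-1}\phi^t\tw)$ by shifting the basepoint from $p$ to $\gamma p$ via the cocycle and keep track of signs at every step, so that $b_{\tw^-}^\gamma-t$ ends up on the correct side with the right orientation. Once that identity is in hand, reading off the three Hopf coordinates on each side completes the proof of the lemma.
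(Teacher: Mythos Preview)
Your proposal is correct and takes essentially the same approach as the paper: unpack both boxes in Hopf coordinates, use the $\Gamma^*$ inclusions to reduce the boundary data to $\Pa'\times\gamma\Fu$, and then compute the transformed $s$-coordinate via the Busemann cocycle together with the flow identity $s(\phi^t\tw)=s(\tw)+t$. If anything, you are slightly more explicit than the paper about exactly where the inclusions $\gamma\Fu\subset\Fu'$ and $\Pa'\subset\gamma\Pa$ are needed to collapse the boundary intersections.
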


\footnote{Compare this to the proof of  \cite[Lemma 4.13]{CKW2}}

\begin{proof}[Proof of Lemma \ref{lem:s-interval}]
To prove that $\tB_{\theta'}^{\epsilon^2}\cap \phi^{-1}\gamma_*\tB_\theta^\alpha\subset E^{-1}(\Pa'\times\gamma\Fu)$, we observe that if $E(\tw) \notin \Pa'\times \gamma\Fu$, then either $\tw^-\notin \Pa'$, so $\tw\notin \tB_{\theta'}^{\epsilon^2}$, or $\tw^+\notin \gamma\Fu$, so 
$\tw\notin \phi^{-t}\gamma_*\tB_\theta^{\alpha}$.

It remains to show that given $\tw\in E^{-1}(\Pa'\times\gamma\Fu)$, we have
\begin{align}
\label{eqn:S-int}
\tw\in \tB_{\theta'}^{\epsilon^2} \ &\Leftrightarrow\ s(\tw) \in [0,\alpha], \text{ and}\\
\label{eqn:gB-int}
\tw\in \phi^{-t} \gamma_* \tB_\theta^\alpha
\ &\Leftrightarrow\ s(\tw) \in b_{\tw^{-}}^\gamma - t + [0,\alpha].
\end{align}
The first of these is immediate from the definition of $\tB'$. For the second, we observe that $s(\tv) = b_{\tv^-}(\pi \tv,p) = b_{\gamma \tv^-}(\gamma\pi \tv,\gamma p)$, and thus
\begin{align*}
\gamma_* \tB
&= \{\gamma_* \tv : \tv\in E^{-1}(\Pa\times\Fu)
\text{ and } b_{\tv^-}(\pi \tv,p) \in [0,\alpha] \} \\
&= \{ \tw \in E^{-1}(\gamma\Pa\times\gamma\Fu) : 
b_{\tw^-}(\pi \tw,\gamma p) \in [0,\alpha]\}
\end{align*}
By \cite[Equation (3.1)]{CKW2} and \cite[Equation (3.2)]{CKW2}, we have
\[
b_{\tw^-}(\pi \tw, \gamma p)
= b_{\tw^-}(\pi \tw, p) + b_{\tw^-}(p,\gamma p)
= s(\tw) - b_{\tw^-}^\gamma;
\]
moreover, since $s(\phi^t \tw) = s(\tw) + t$ by \cite[Equation (3.8)]{CKW2}, we see that $\phi^t \tw \in \gamma_*\tB$ if and only if $s(\tw) - b_{\tw^-}^\gamma + t \in [0,\alpha]$, which proves \eqref{eqn:gB-int} and completes the proof of the lemma.
\end{proof}

\begin{proof}[Proof of Lemma \ref{lem:full-branch}]
By Lemma \ref{lem:s-interval}, the fact that $\tB_{\theta'}^{\epsilon^2} \cap \phi^{-t} \gamma_* \tB_\theta^\alpha \neq\emptyset$ implies existence of $\eta \in \Pa'$ such that
\[
(b_\eta^\gamma - t + [0,{\alpha}]) \cap [0,\epsilon^2] \neq \emptyset
\]
from which we deduce that
\[
b_\eta^\gamma - t - \epsilon^{\frac{3}{2}} + [0,{\alpha}+2\epsilon^{\frac{3}{2}}] \supset [0,\epsilon^2]
\]
By \cite[Lemma (4.11)]{CKW2}, it follows that every $\xi\in \Pa'$ has
\[
(b_\xi^\gamma - t - \epsilon^{\frac{3}{2}} + [0,\alpha+2\epsilon^{\frac{3}{2}}]) \cap [0,\epsilon^2] \neq\emptyset
\]
which in turn implies that
\[
b_\xi^\gamma - t - 2\epsilon^{\frac{3}{2}} + [0,{\alpha}+4\epsilon^{\frac{3}{2}}] \supset [0,\epsilon^2].
\]
By Lemma \ref{lem:s-interval}, this completes the proof.
\end{proof}

\begin{proof}[Proof of Lemma \ref{lem:scaling}]
By definition of $\hm$, we have $\hm(\hS^\gamma) = \overline \tm( \overline\tS^\gamma) = \epsilon^2 \bar\mu(\Pa\times\gamma\Fu)$. Then we need to prove that $\bar\mu(\Pa\times\gamma\Fu)=e^{\pm 4 h \epsilon} e^{-ht} \mu_p(\Pa')\mu_p(\Fu)$

Given $(\xi,\eta)\in\Pa'\times\gamma\Fu$, we can take $q$ to lie on a geodesic connecting $\xi$ and $\eta$, with $b_\xi(q,p)=0$; then we have
\[
|\beta_p(\xi,\eta)|:= |b_\xi(q,p) + b_\eta(q,p)|
\leq d(q,p) < \epsilon/2,
\]
where the last inequality uses \cite[Lemma 3.9]{CKW2}. Using this together with \eqref{eq:mub} gives
\[
\bar\mu(\Pa'\times\gamma\Fu) = e^{\pm h \epsilon/2} \mu_p(\Pa') \mu_p(\gamma\Fu),
\]
Using \cite[Proposition 5.1 (a)]{CKW1}  gives
\[
\mu_p(\gamma \Fu) =  \mu_{\gamma^{-1} p}(\Fu),
\]
and \cite[Proposition 5.1 (b)]{CKW1} gives
\[
\frac{d\mu_{\gamma^{-1} p}}{d\mu_p}(\eta) 
= e^{-h b_\eta(\gamma^{-1} p,p)}.
\]
When $\eta = \tc(-\infty)$, where $\tc:=c_{p,\gamma^{-1} p}$. Using the visibility condition as in the proof of Lemma \ref{lem:vis3}, for $t$ large enough $\eta\in\Fu'_{\theta'+\iota}$ for some $\iota>0$ very small. Using Lemma \ref{lem:vis3}, $b_\eta(p,\gamma p)= t\pm4\epsilon$. By \cite[Lemma 4.11]{CKW2}, for $\xi\in \Fu'$, $b_\xi(\gamma^{-1}p,p)$ varies by at most $\epsilon^2$. We conclude that $\mu_p(\gamma\Fu) = e^{\pm5\epsilon} e^{-ht} \mu_p(\Fu)$, and and this proves the lemma.
\end{proof}


      \end{document}